\pgfplotsset{compat=1.16}
\definecolor{UCRceleste}{RGB}{0,192,243}
\definecolor{Crimson}{RGB}{220, 20, 60}
\definecolor{mintgreen}{RGB}{152,255,152}
\definecolor{pinksalmon}{RGB}{255,102,102}
\definecolor{hueso}{RGB}{245,245,220}
\definecolor{marfil}{RGB}{255,253,208}
\definecolor{amarillo}{RGB}{255,255,0}
\numberwithin{equation}{section}
\newtheorem{theorem}{Theorem}[section]
\newtheorem{lemma}[theorem]{Lemma}
\newtheorem{proposition}[theorem]{Proposition}
\DeclareMathOperator{\Ind}{Ind}
\def\moverlay{\mathpalette\mov@rlay}
\def\mov@rlay#1#2{\leavevmode\vtop{%
   \baselineskip\z@skip \lineskiplimit-\maxdimen
   \ialign{\hfil$\m@th#1##$\hfil\cr#2\crcr}}}
\newcommand{\charfusion}[3][\mathord]{
    #1{\ifx#1\mathop\vphantom{#2}\fi
        \mathpalette\mov@rlay{#2\cr#3}
      }
    \ifx#1\mathop\expandafter\displaylimits\fi}
\newcommand{\suchthat}{\;\ifnum\currentgrouptype=16 \middle\fi|\;}
\newcommand{\bR}{{\mathbb{R}}}      
\theoremstyle{definition}
\newtheorem{remark}[theorem]{Remark}
\newtheorem{definition}[theorem]{Definition}
\newtheorem{example}[theorem]{Example}
\title[A nonlinear relapse model with disaggregated contact rates]{A nonlinear relapse model with disaggregated contact rates: analysis of a forward-backward bifurcation}
\author[J. Calvo-Monge]{Jimmy Calvo-Monge \orcidlink{0000-0002-4823-2455}}
\address{Escuela de Matem\'atica, Universidad de Costa Rica, San Jose, Costa Rica}
\email{jimmy.calvo@ucr.ac.cr}
\author[F. Sanchez]{Fabio Sanchez \orcidlink{0000-0002-5552-3672}}
\address{Centro de Investigaci\'on en Matem\'atica Pura y Aplicada-Escuela de Matem\'atica, Universidad de Costa Rica, San Jose, Costa Rica}
\email{fabio.sanchez@ucr.ac.cr}
\author[J.G. Calvo]{Juan G. Calvo \orcidlink{0000-0001-9948-9966}}
\address{Centro de Investigaci\'on en Matem\'atica Pura y Aplicada-Escuela de Matem\'atica, Universidad de Costa Rica, San Jose, Costa Rica}
\email{juan.calvo@ucr.ac.cr}
\author[D. Mena]{Dar\'io Mena \orcidlink{0000-0002-9443-391X}}
\address{Centro de Investigaci\'on en Matem\'atica Pura y Aplicada-Escuela de Matem\'atica, Universidad de Costa Rica, San Jose, Costa Rica}
\email{dario.menaarias@ucr.ac.cr}
\begin{document}

\maketitle

\begin{abstract}
Throughout the progress of epidemic scenarios, individuals in different health classes are expected to have different average daily contact behavior. This contact heterogeneity has been studied in recent adaptive models and allows us to capture the inherent differences across health statuses better. Diseases with reinfection bring out more complex scenarios and offer an important application to consider contact disaggregation. Therefore, we developed a nonlinear differential equation model to explore the dynamics of relapse phenomena and contact differences across health statuses. Our incidence rate function is formulated, taking inspiration from recent adaptive algorithms. It incorporates contact behavior for individuals in each health class. We use constant contact rates at each health status for our analytical results and prove conditions for different forward-backward bifurcation scenarios. The relationship between the different contact rates heavily influences these conditions. Numerical examples highlight the effect of temporarily recovered individuals and initial conditions on infected population persistence.
\end{abstract}

\textbf{Keywords}: nonlinear relapse, nonlinear incidence, mathematical model, backward bifurcation, adaptive behavior.

\textbf{Mathematics Subject Classification:} 37N25, 92B05.


\section{Introduction}

Epidemiological models serve as an essential tool for understanding disease dynamics. Many historical examples yield insightful results on how initial conditions and parameters alter the progression of an epidemic outbreak \cite{Brur09}; critical concepts developed in this setting, such as the $R_0$ reproductive number, work as threshold indicators for disease behavior. Modern epidemiological mathematics heavily use bread-and-butter SIR models \cite{AndMay79}, and current research efforts in this area are devoted to modifying the classical models, allowing them to capture all the intricacies of real-world disease dynamics, for example, better representation of social distancing phenomena, compliance conducts, economic conditions and other factors.

One effort in this area is related to studying human contact behavior. Contacts between individuals of different characteristics (health statuses, age groups ...) constitute a key factor in disease spread \cite{Zhang20, Mossong08, Lauro21}. The need to study contact differences due to health status requires that classical models be modified. In classical settings, there is an implicit assumption of homogeneous behavior in each compartment (for example, among susceptible and infected individuals) through establishing constant or proportional contact rates. This approach hides different individuals' inherent characteristics and responses toward the disease's progress. 

We now have a history of multiple efforts to deal with this problem. A first approach consists of specifying non-linear incidence rate functions by constructing functions that reflect the impact of the state of the model on the contact rates through time; for example, \cite{Li17, Liu87,Xiao07,HU201212,Hethcote91,Alex06} for models without relapse, and \cite{Arr22,Xiao10} for models with non-linear relapse rates. In these cases, the general idea is to include functions of the form
\begin{align*}
    g_{\kappa,\nu}(\cdot)I = \frac{\kappa I^p}{1+\nu I^q},
\end{align*}
as the incidence rate function for the disease, using positive constants $\kappa,\nu,p,q$ and with $I = I(t)$ the infected population size in time. Within relapse phenomena, very similarly, \cite{Arr22} proposes the function
\begin{align}\label{c_arr}
    g_{\kappa,\nu}(\cdot) = \frac{\kappa}{1+ \nu \frac{R}{N}},
\end{align}
where $R= R(t)$ is the number of (temporarily) recovered individuals at time $t$, and $N$ is the total population size. As we can see, with this approach, modelers usually specify functions that decrease when the epidemic burden is high. This makes them depend inversely on the sizes of infected or recovered populations in time. The subsequent analysis is commonly focused on the impact of the model hyper-parameters (constants such as $\kappa,\nu$ in the non-linear functions) on the system behavior.

Key analytical results can be obtained using this approach. They can tackle the problem of different behaviors among health classes: what we call the \textit{epidemiological heterogeneity} of agents involved in the disease progression. As mentioned in \cite{Funk10}, although these models are rich in dynamic and analytical properties, the exact contact dynamic behavior sometimes is not emphasized in their formulation. In recent years new interest has been placed in representing more dynamic information on contact rates from the study population. Particularly, there has been interest in the \textit{economical heterogeneity} of individuals involved in epidemics and in the inclusion of utilitarian adaptive decisions individuals make within the development of the epidemic scenario.

The main contribution from \cite{Feni11} consists of devising a process in which contact rates for individuals in each health class can be updated simultaneously as disease changes. The idea consists of modeling the individuals as decision agents who consider their environment status and utility to decide optimal contact rates throughout time. This approach considers individuals' economic considerations when deciding how many contacts they should engage with at each time period. A detailed review of other proposals under the epi-economical approach is available in \cite{Funk10}.

This recent technique allows the computation of contact rates alongside the progress of the disease through an optimization decision process performed by each individual. We call this procedure the \textit{adaptive setting}. This has proved helpful in creating epidemiological models closer to the actual decision-making processes made by individuals. It has been applied to create more realistic settings and compare them with the classical formulation. For example, thanks to the use of the adaptive setting, there are novel insights on the true impact of asymptomatic individuals \cite{Balt2}, a more intuitive understanding of final epidemic burden states in contrast to the classical results \cite{Balt1}, and a deeper analysis on social distancing \cite{Feni11_2}. Analytical comparisons and conjectures for the adaptive setting can be found under the non-relapse case in \cite{CCast13}.

The adaptive setting is not detached from the first approach. To compute contact rates adaptively, we must first define non-linear incidence rate functions that will use these contact rates. The formulation of non-linear incidence rate functions in the adaptive setting is commonly expressed in the form:
\begin{align}\label{C_orig}
    g(S,I,R)= \frac{C^sC^iN}{SC^s + IC^i + R C^{r}},
\end{align}
where $C^h=C^h(S,I,R)$ (for $h\in\{s,i,r\})$ is the average number of contacts for each health status individual per time period, and $N$ is the total population size. These contact rates might be functions that depend on the status of the disease $S,I,R$, especially when using the adaptive approach, where they are updated throughout the disease dynamics.

This adaptive setting constitutes a recent effort and offers a promising strategy to capture complex epi-economical phenomena better. Given its novelty, the literature on adaptive behavior has not been applied to non-linear relapse scenarios. Although several references propose non-linear relapse incidence rate functions for epidemiological differential equation models, the formulation (\ref{C_orig}) merits further analytical inspection in the relapse scenario. This paper uses this formula for incidence rate functions to study a relapse model. We will examine the analytical impact of specifying contact rates using (\ref{C_orig}) and the repercussions on how to interpret these models. Our results will be framed in terms of the relations between the contact rates $C^s, C^i$, and $C^{r}$ when they are assumed constant. In Section \ref{section2}, we propose our model and explore its main analytical properties. We present our main theoretical results in Section \ref{section3}, where bifurcation plots and local stability are considered; all mathematical proofs can be found in Section \ref{section6}. Section \ref{section4} provides some numerical simulations of sensitivity to contact rates and initial conditions. A discussion of our main results can be found in Section \ref{section6}.

\section{Non-linear relapse rate model}\label{section2}

We propose an epidemiological model with the presence of non-linear relapse behavior. Following \cite{Schz19} and \cite{Snchz07}, we consider three compartments of individuals: $S$ (susceptible), $I$ (infected), and $R$ (recovered with the possibility of reinfection) and represent the model dynamics using the following system of equations,
\begin{align}\label{model1}
    & \frac{dS}{dt}=  -g(\cdot)\beta \frac{SI}{N} + \mu N - \mu S, \nonumber\\
    & \frac{dI}{dt}= g(\cdot)\beta \frac{SI}{N} + \phi\frac{R I}{N}-(\gamma+\mu)I,  \\
    & \frac{dR}{dt}= \gamma I - \phi \frac{IR}{N} - \mu R.\nonumber
\end{align}

$N = S + I + R$ is constant. The function $g(\cdot)$ is the incidence rate function of the model. From now on, we will take $g(\cdot)$ given by (\ref{C_orig}). This is the proposed relaxation of the burden of health status homogeneity, present in the classical formulation. The likelihood of infection when there is contact with an infected individual is given by $\beta$, the rate of recovery by $\gamma$, and the rate of reinfection represented by $\phi$. We have a demographic exit and entrance rate for the system given by $\mu$. The incidence rate function, $g(\cdot)$, represents the contact rate between susceptible and infected individuals, implying that $g(\cdot)\beta$ acts as the rate at which susceptible become infected.

We re-scale the system (\ref{model1}) by substituting $s := \frac{S}{N}, i := \frac{I}{N}$ and $r:= \frac{R}{N}$, to obtain the equivalent model
\begin{subequations}\label{model}
\begin{align}
    & \frac{ds}{dt}=  -g(\cdot)\beta si + \mu  - \mu s. \label{modeleqn1} \\
    & \frac{di}{dt}= g(\cdot)\beta si + \phi r i-(\gamma+\mu)i.  \label{modeleqn2}  \\
    & \frac{dr}{dt}= \gamma i - \phi r i - \mu r.  \label{modeleqn3} 
\end{align}
\end{subequations}

The incidence rate function $g(\cdot)$ can also be re-scaled and substituted by: $$g(\cdot) = g(s,i,r) = \frac{C^sC^i}{sC^s + iC^i + r C^{r}}.$$

\begin{remark}\label{C_functions_remark}
In general, contact rates are functions that depend on the status of the disease, that is $C^h= C^h(S,I,R)$ for each $h \in \{s,i,r\}$. For the remainder of this article, we consider these functions constant. We aim to generalize mathematical results obtained in \mbox{\cite{Arr22}} and elucidate possible analytical properties of the adaptive algorithm in the relapse case. In this case, our mathematical analysis, including the calculation of $R_0$ and the determination of stable equilibria, will be greatly simplified. As will be seen shortly, all our results are greatly influenced by the ratios between the contact rates $C^h$.
\end{remark}

\section{Mathematical Analysis}\label{section3}

\subsection{Basic Reproductive Number}

Using the next generation matrix approach \cite{Hethcote00}, we compute a basic reproductive number $R_0$ for this system. Here, it is simple to see that 
\begin{align*}
R_0 = \frac{\beta}{\gamma+\mu} \lim_{(s,i,r) \to (1,0,0)} g(s,i,r) = \frac{\beta}{\gamma+\mu}C_0.
\end{align*} 
Thus, $R_0$ depends on $C_0$, the limit of the incidence function value when the system converges to the disease-free state. When all contact coefficients $C^h$ are constant, then $C_0 = C^i$.

\subsection{Finding equilibrium points}

First, we study the disease-free equilibrium, where $(s(t),i(t),r(t)) = (1,0,0)$. 

\begin{theorem}
    The disease-free equilibrium is stable if and only if $R_0 < 1$.
\end{theorem}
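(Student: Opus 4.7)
The plan is to prove stability via a direct Jacobian computation at $(s,i,r)=(1,0,0)$. Summing equations \eqref{modeleqn1}--\eqref{modeleqn3} gives $\frac{d}{dt}(s+i+r)=0$, so the simplex $\{s+i+r=1\}$ is invariant and we may reduce to the two-dimensional system in $(i,r)$, substituting $s=1-i-r$. Before differentiating, I would check that the incidence function extends smoothly to a neighborhood of the disease-free equilibrium: the denominator of $g(s,i,r)=C^sC^i/(sC^s+iC^i+rC^r)$ equals $C^s>0$ at $(1,0,0)$, so $g$ is $C^1$ there and $g(1,0,0)=C^i$. This matches the limit $C_0=C^i$ already used in the calculation of $R_0$.

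Next I would linearize the reduced $(i,r)$-system at $(0,0)$. Since both infection terms $g(\cdot)\beta si$ and $\phi r i$ carry a factor of $i$, all derivatives of the quadratic pieces vanish at $(0,0)$ except those that strip off the explicit $i$. This yields a lower-triangular Jacobian
\[
J=\begin{pmatrix} C^i\beta-(\gamma+\mu) & 0 \\ \gamma & -\mu \end{pmatrix},
\]
with eigenvalues $-\mu$ and $C^i\beta-(\gamma+\mu)$. Since $\mu>0$, local asymptotic stability of the DFE is equivalent to $C^i\beta-(\gamma+\mu)<0$, i.e.\ to $R_0=C^i\beta/(\gamma+\mu)<1$; when $R_0>1$ the positive eigenvalue makes the DFE a saddle. (Alternatively, the same conclusion follows from the next-generation matrix theorem of van den Driessche--Watmough invoked earlier, since the regularity hypotheses at the DFE are satisfied.)

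I do not expect any serious obstacle here: the calculation is essentially a one-variable spectral analysis because of the triangular structure. The only point worth double-checking is the smoothness of $g$ at the DFE; had the dominant term in the denominator involved $i$ or $r$ rather than $s$, one would need directional limits and a more delicate argument. In the present formulation $sC^s$ dominates near $(1,0,0)$, so the linearization is unambiguous and the ``if and only if'' follows cleanly from the sign of the single nontrivial eigenvalue.
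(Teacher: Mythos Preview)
Your argument is correct and follows essentially the same route as the paper: linearize at the disease-free equilibrium and read off the eigenvalues, with the single nontrivial one being $\beta C^i-(\gamma+\mu)$. The only cosmetic difference is that you first use the conservation law $s+i+r=1$ to reduce to the two-dimensional $(i,r)$-system, whereas the paper linearizes the full $3\times 3$ system and recovers the extra eigenvalue $-\mu$ coming from the invariant simplex; your added remark on the smoothness of $g$ at $(1,0,0)$ is a welcome clarification that the paper leaves implicit.
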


\begin{proof}
    Note that the Jacobian matrix of the system (\ref{model}) is given by
    \begin{align*}
        J(s,i,r) = \begin{pmatrix}
        -\beta i ( g_s s + g) - \mu & -\beta s (g_i i + g) & -\beta s i g_{r} \\
        \beta i ( g_s s + g) & \beta s (g_i i + g) + \phi r - (\mu + \gamma) & \beta s i g_{r} + \phi i \\
        0 & \gamma - \phi r & - \phi i - \mu
        \end{pmatrix},
    \end{align*}
    where $g_h$ is the partial derivative of $g$ with respect to the variable $h \in \{s,i,r \}$. Taking the limit to the disease-free point, we get
    \begin{align*}
        \lim_{(s,i,r) \to (1,0,0)} J(s,i,r) = \begin{pmatrix}
        - \mu & -\beta C_0 & 0 \\
        0 & \beta C_0 - (\mu + \gamma) & 0 \\
        0 & \gamma & - \mu
        \end{pmatrix},
    \end{align*}
    which has eigenvalues $\lambda_1, \lambda_2 = - \mu$ and $\lambda_3 = \beta C_0 - (\mu + \gamma)$. This point is stable if and only if $\lambda_3<0$, which is equivalent to $R_0<1$.
 \end{proof}

The case of epidemiological interest is when $R_0>1$, in which we study the existence of endemic equilibria. Initial calculations show that these points must be in the form$
\left(1-i^*-\frac{\gamma i^*}{\phi i^*+ \mu}, i^*, \frac{\gamma i^*}{\phi i^*+ \mu}\right)$. To find the value of $i^*$ at the equilibrium, we can substitute this point into (\ref{modeleqn2}), use that $r = 1 - s - i$ (as $N$ is constant in this model) and obtain that $i^*$ must satisfy the cubic equation $a_3X^3+ a_2 X^2 + a_1X + a_0 = 0$, whose coefficients are:
\begin{align}\label{coefficients}
    & a_3 = R_\phi^2 R_0 - R_\mu R_\phi^2\left(1- \kappa\right), \nonumber \\
    & a_2 = R_\phi \biggl[R_0(1-R_\phi) +R_\mu( R_0 + R_\phi) - R_\mu(1-R_\mu)\left(1- \theta\right) -R_\mu (1+R_\mu)\left(1 - \kappa\right) \biggr],  \nonumber \\
    & a_1 = R_\mu \biggl[ R_0(1-R_\phi) +  R_\phi(1-R_0) - (1-R_\mu)\left(1 - \theta\right) + R_\mu R_\phi - R_\mu \left(1 - \kappa\right) \biggr],  \nonumber \\
    & a_0 = R_\mu^2(1-R_0),
\end{align}
where 
\begin{align}\label{rphirmu}
    \kappa = \frac{C^i}{C^s},  \quad
 \theta = \frac{C^{r}}{C^s}, \quad R_{\mu}=\frac{\mu}{\mu+\gamma}, \quad R_\phi = \frac{\phi}{\mu+\gamma}.
\end{align}
Mathematically, the model proposed in \cite{Arr22} can be seen as a special case of our model: if we use $C^s=C^i$ and $C^{r}=C^i(1+\nu)$, we obtain $g(\cdot)$ given by (\ref{c_arr}). This cubic equation also becomes the generalization of the corresponding one obtained in \cite{Arr22}. We also point out the biological interpretation of these contact quotients: $\kappa$ represents the change expected in contacts made by an individual after it becomes infected, and $\theta$ compares the difference between the individual contacts before infection and after recovery.

We proceed to examine the behavior and existence of equilibria points based only on the disease parameters of the model ($R_\phi$, $R_\mu$), the infected individual response to the disease ($R_0$), and the relationship between the average contact rates between compartments ($\frac{C^i}{C^s}$ and $\frac{C^{r}}{C^s}$). In Figure (\ref{fig_cubic}) we use  $\mu = 0.00015, \gamma = 0.0027, \beta = 0.00096$ and $\phi = 0.044$, taken from simulations made in \cite{Arr22} and drug epidemic parameter estimation performed in \cite{Snchz07}. We create bifurcation plots for each $R_0$ and varying the quotients $\kappa = \frac{C^i}{C^s}$ and $\theta = \frac{C^{r}}{C^s}$. First, see Figure (\ref{fig_cubic}) for $\theta = 1.7$.

\begin{figure}[H]

    \begin{subfigure}[b]{0.45\textwidth}
         \begin{tikzpicture}[scale=0.8]
            \begin{axis}[
                title = $\kappa \text{ = } 0.8 \text{, } \theta \text{ = } 1.7$ ,
                axis lines=middle,
                cycle list name=black white,
                xmin=0.8,xmax=1.15,ymin=0,ymax=0.3,
                ytick={0.05,0.1,0.15,0.2,0.25},
                xlabel={$R_0(C^i)$},
                ylabel={$i^*$},
                ylabel near ticks,
                xlabel near ticks,
            ]
            \addplot+[
                blue,
                only marks,
                mark size=1.25pt]
            table[x=x, y=y]
            {Simulations_Data/dat0_stable.dat};

            \draw [line width=2.5pt, orange, dashed] (1.05,0.15) to (1.1,0.15);
            \node[below] at (1.075,0.15){$\text{\tiny{Unstable}}$};
            
            \draw [line width=2.5pt, blue] (1.05,0.1) to (1.1,0.1);
            \node[below] at (1.075,0.1){$\text{\tiny{Stable}}$};

            \draw [line width=2.5pt, orange] (0.854504505,0.12054532) to (0.857777778,0.109138126);

            \draw [line width=2.5pt, orange] (0.861051051,0.102657229) to (0.864324324,0.09761979);
        
            \draw [line width=2.5pt, orange] (0.867597598,0.09336489) to (0.870870871,0.089620874);
        
            \draw [line width=2.5pt, orange] (0.874144144,0.086244302) to (0.877417417,0.083148373);
        
            \draw [line width=2.5pt, orange] (0.880690691,0.080275764) to (0.883963964,0.077586228);
        
            \draw [line width=2.5pt, orange] (0.887237237,0.075050189) to (0.890510511,0.072645129);
        
            \draw [line width=2.5pt, orange] (0.893783784,0.070353413) to (0.897057057,0.0681609);
        
            \draw [line width=2.5pt, orange] (0.90033033,0.066056029) to (0.903603604,0.064029184);
        
            \draw [line width=2.5pt, orange] (0.906876877,0.062072247) to (0.91015015,0.060178278);
        
            \draw [line width=2.5pt, orange] (0.913423423,0.058341266) to (0.916696697,0.056555951);
        
            \draw [line width=2.5pt, orange] (0.91996997,0.054817681) to (0.923243243,0.053122297);
        
            \draw [line width=2.5pt, orange] (0.926516517,0.05146605) to (0.92978979,0.04984552);
        
            \draw [line width=2.5pt, orange] (0.933063063,0.048257561) to (0.936336336,0.046699248);
        
            \draw [line width=2.5pt, orange] (0.93960961,0.045167829) to (0.942882883,0.043660687);
        
            \draw [line width=2.5pt, orange] (0.946156156,0.042175304) to (0.949429429,0.040709221);
        
            \draw [line width=2.5pt, orange] (0.952702703,0.039260004) to (0.955975976,0.037825203);
        
            \draw [line width=2.5pt, orange] (0.959249249,0.036402314) to (0.962522523,0.034988726);
        
            \draw [line width=2.5pt, orange] (0.965795796,0.033581661) to (0.969069069,0.032178108);
        
            \draw [line width=2.5pt, orange] (0.972342342,0.030774715) to (0.975615616,0.029367671);
        
            \draw [line width=2.5pt, orange] (0.978888889,0.027952517) to (0.982162162,0.026523888);
        
            \draw [line width=2.5pt, orange] (0.985435435,0.025075117) to (0.988708709,0.023597611);
        
            \draw [line width=2.5pt, orange] (0.991981982,0.022079805) to (0.995255255,0.020505288);
        
            \draw [line width=2.5pt, orange] (0.998528529,0.018849108) to (1.001801802,0.017069502);
        
            \draw [line width=2.5pt, orange] (1.005075075,0.015085312) to (1.008348348,0.012687897);
            
            \addplot +[red, dashed, mark=none] coordinates {(0.854504505, -0.05) (0.854504505, 0.27)};
            \addplot +[red, dashed, mark=none] coordinates {(1, -0.05) (1, 0.27)};
            \addplot +[red, dashed,mark=none] coordinates {(1.0116, -0.05) (1.0116, 0.27)};
            \node at (0.83,0.28){$\mathcal{R}_1$};
            \node at (0.925,0.28){$\mathcal{R}_2$};
            \node at (1.01,0.28){$\mathcal{R}_3$};
            \node at (1.075,0.28){$\mathcal{R}_4$};
            \end{axis}
        \end{tikzpicture}
     \end{subfigure}\hspace{5mm}
    \begin{subfigure}[b]{0.45\textwidth}
         \begin{tikzpicture}[scale=0.8]
            \begin{axis}[
                title = $\kappa \text{ = } 0.5 \text{, } \theta \text{ = } 1.7$ ,
                axis lines=middle,
                cycle list name=black white,
                xmin=0.8,xmax=1.12,ymin=0,ymax=0.32,
                ytick={0.05,0.1,0.15,0.2,0.25},
                xlabel={$R_0(C^i)$},
                ylabel={$i^*$},
                ylabel near ticks,
                xlabel near ticks,
            ]
            \addplot+[
                blue,
                only marks,
                mark size=1.25pt]
            table[x=x, y=y]
            {Simulations_Data/dat1_stable.dat};

            \draw [line width=2.5pt, orange, dashed] (1.05,0.15) to (1.1,0.15);
            \node[below] at (1.075,0.15){$\text{\tiny{Unstable}}$};
            
            \draw [line width=2.5pt, blue] (1.05,0.1) to (1.1,0.1);
            \node[below] at (1.075,0.1){$\text{\tiny{Stable}}$};

            \draw [line width=2.5pt, orange] (0.829409409,0.12616413) to (0.832682683,0.116013695);

            \draw [line width=2.5pt, orange] (0.835955956,0.10965553) to (0.839229229,0.104629799);
        
            \draw [line width=2.5pt, orange] (0.842502503,0.100354012) to (0.845775776,0.09657688);
        
            \draw [line width=2.5pt, orange] (0.849049049,0.093162494) to (0.852322322,0.090027399);
        
            \draw [line width=2.5pt, orange] (0.855595596,0.087115941) to (0.858868869,0.084388768);
        
            \draw [line width=2.5pt, orange] (0.862142142,0.081816814) to (0.865415415,0.079377868);
        
            \draw [line width=2.5pt, orange] (0.868688689,0.077054486) to (0.871961962,0.074832658);
        
            \draw [line width=2.5pt, orange] (0.875235235,0.072700918) to (0.878508509,0.07064973);
        
            \draw [line width=2.5pt, orange] (0.881781782,0.06867105) to (0.885055055,0.06675801);
        
            \draw [line width=2.5pt, orange] (0.888328328,0.064904679) to (0.891601602,0.063105889);
        
            \draw [line width=2.5pt, orange] (0.894874875,0.06135709) to (0.898148148,0.059654245);
        
            \draw [line width=2.5pt, orange] (0.901421421,0.057993744) to (0.904694695,0.056372332);
        
            \draw [line width=2.5pt, orange] (0.907967968,0.054787055) to (0.911241241,0.05323521);
        
            \draw [line width=2.5pt, orange] (0.914514515,0.051714307) to (0.917787788,0.050222036);
        
            \draw [line width=2.5pt, orange] (0.921061061,0.048756238) to (0.924334334,0.047314876);
        
            \draw [line width=2.5pt, orange] (0.927607608,0.045896017) to (0.930880881,0.044497809);
        
            \draw [line width=2.5pt, orange] (0.934154154,0.043118457) to (0.937427427,0.041756213);
        
            \draw [line width=2.5pt, orange] (0.940700701,0.040409346) to (0.943973974,0.03907613);
        
            \draw [line width=2.5pt, orange] (0.947247247,0.037754817) to (0.950520521,0.036443619);
        
            \draw [line width=2.5pt, orange] (0.953793794,0.035140674) to (0.957067067,0.033844017);
        
            \draw [line width=2.5pt, orange] (0.96034034,0.032551538) to (0.963613614,0.031260933);
        
            \draw [line width=2.5pt, orange] (0.966886887,0.029969638) to (0.97016016,0.028674742);
        
            \draw [line width=2.5pt, orange] (0.973433433,0.027372866) to (0.976706707,0.026059999);
        
            \draw [line width=2.5pt, orange] (0.97997998,0.024731251) to (0.983253253,0.023380485);
        
            \draw [line width=2.5pt, orange] (0.986526527,0.021999733) to (0.9897998,0.020578222);
        
            \draw [line width=2.5pt, orange] (0.993073073,0.01910063) to (0.996346346,0.017543676);
        
            \draw [line width=2.5pt, orange] (0.99961962,0.015868527) to (1.002892893,0.014000362);
            
            \addplot +[red, dashed, mark=none] coordinates {(0.829409409, -0.05) (0.829409409, 0.27)};
            \addplot +[red, dashed, mark=none] coordinates {(1, -0.05) (1, 0.27)};
            \addplot +[red, dashed,mark=none] coordinates {(1.0116, -0.05) (1.0116, 0.27)};
            \node at (0.83,0.29){$\mathcal{R}_1$};
            \node at (0.925,0.29){$\mathcal{R}_2$};
            \node at (1.01,0.29){$\mathcal{R}_3$};
            \node at (1.075,0.29){$\mathcal{R}_4$};
            \end{axis}
        \end{tikzpicture}
     \end{subfigure}
    \begin{subfigure}[b]{0.45\textwidth}
         \begin{tikzpicture}[scale=0.8]
            \begin{axis}[
                title = $\kappa \text{ = } 0.3 \text{, } \theta \text{ = } 1.7$ ,
                axis lines=middle,
                cycle list name=black white,
                xmin=0.75,xmax=1.12,ymin=0,ymax=0.32,
                ytick={0.05,0.1,0.15,0.2,0.25},
                xlabel={$R_0(C^i)$},
                ylabel={$i^*$},
                ylabel near ticks,
                xlabel near ticks,
            ]
            \addplot+[
                blue,
                only marks,
                mark size=1.25pt]
            table[x=x, y=y]
            {Simulations_Data/dat2_stable.dat};

            \draw [line width=2.5pt, orange, dashed] (1.05,0.15) to (1.1,0.15);
            \node[below] at (1.075,0.15){$\text{\tiny{Unstable}}$};
            
            \draw [line width=2.5pt, blue] (1.05,0.1) to (1.1,0.1);
            \node[below] at (1.075,0.1){$\text{\tiny{Stable}}$};

            \draw [line width=2.5pt, orange] (0.810860861,0.135485902) to (0.814134134,0.123055826);

            \draw [line width=2.5pt, orange] (0.817407407,0.116210351) to (0.820680681,0.1109109);
        
            \draw [line width=2.5pt, orange] (0.823953954,0.106443256) to (0.827227227,0.102517298);
        
            \draw [line width=2.5pt, orange] (0.830500501,0.098980695) to (0.833773774,0.095741585);
        
            \draw [line width=2.5pt, orange] (0.837047047,0.092739456) to (0.84032032,0.089931904);
        
            \draw [line width=2.5pt, orange] (0.843593594,0.087287828) to (0.846866867,0.08478359);
        
            \draw [line width=2.5pt, orange] (0.85014014,0.082400716) to (0.853413413,0.080124426);
        
            \draw [line width=2.5pt, orange] (0.856686687,0.077942668) to (0.85995996,0.075845451);
        
            \draw [line width=2.5pt, orange] (0.863233233,0.073824376) to (0.866506507,0.071872292);
        
            \draw [line width=2.5pt, orange] (0.86977978,0.069983049) to (0.873053053,0.0681513);
        
            \draw [line width=2.5pt, orange] (0.876326326,0.06637236) to (0.8795996,0.064642087);
        
            \draw [line width=2.5pt, orange] (0.882872873,0.062956795) to (0.886146146,0.061313177);
        
            \draw [line width=2.5pt, orange] (0.889419419,0.05970825) to (0.892692693,0.058139304);
        
            \draw [line width=2.5pt, orange] (0.895965966,0.056603863) to (0.899239239,0.055099651);
        
            \draw [line width=2.5pt, orange] (0.902512513,0.053624562) to (0.905785786,0.052176639);
        
            \draw [line width=2.5pt, orange] (0.909059059,0.050754048) to (0.912332332,0.049355063);
        
            \draw [line width=2.5pt, orange] (0.915605606,0.047978049) to (0.918878879,0.046621443);
        
            \draw [line width=2.5pt, orange] (0.922152152,0.045283745) to (0.925425425,0.043963501);
        
            \draw [line width=2.5pt, orange] (0.928698699,0.042659291) to (0.931971972,0.041369716);
        
            \draw [line width=2.5pt, orange] (0.935245245,0.040093388) to (0.938518519,0.038828909);
        
            \draw [line width=2.5pt, orange] (0.941791792,0.037574863) to (0.945065065,0.036329792);
        
            \draw [line width=2.5pt, orange] (0.948338338,0.035092185) to (0.951611612,0.033860446);
        
            \draw [line width=2.5pt, orange] (0.954884885,0.032632871) to (0.958158158,0.031407612);
        
            \draw [line width=2.5pt, orange] (0.961431431,0.030182633) to (0.964704705,0.028955652);
        
            \draw [line width=2.5pt, orange] (0.967977978,0.027724064) to (0.971251251,0.026484835);
        
            \draw [line width=2.5pt, orange] (0.974524525,0.025234362) to (0.977797798,0.023968254);
        
            \draw [line width=2.5pt, orange] (0.981071071,0.022681018) to (0.984344344,0.021365566);
        
            \draw [line width=2.5pt, orange] (0.987617618,0.020012405) to (0.990890891,0.018608213);
        
            \draw [line width=2.5pt, orange] (0.994164164,0.017133143) to (0.997437437,0.015555073);
        
            \draw [line width=2.5pt, orange] (1.000710711,0.013815106) to (1.003983984,0.011779119);
            
            \addplot +[red, dashed, mark=none] coordinates {(0.810860861, -0.05) (0.810860861, 0.29)};
            \addplot +[red, dashed, mark=none] coordinates {(1, -0.05) (1, 0.29)};
            \addplot +[red, dashed,mark=none] coordinates {(1.0116, -0.05) (1.0116, 0.29)};
            \node at (0.79,0.3){$\mathcal{R}_1$};
            \node at (0.91,0.3){$\mathcal{R}_2$};
            \node at (1.01,0.3){$\mathcal{R}_3$};
            \node at (1.075,0.3){$\mathcal{R}_4$};
            \end{axis}
        \end{tikzpicture}
     \end{subfigure}\hspace{5mm}
    \begin{subfigure}[b]{0.45\textwidth}
         \begin{tikzpicture}[scale=0.8]
            \begin{axis}[
                title = $\kappa \text{ = } 0.01 \text{, } \theta \text{ = } 1.7$ ,
                axis lines=middle,
                cycle list name=black white,
                xmin=0.75,xmax=1.12,ymin=0,ymax=0.33,
                ytick={0.05,0.1,0.15,0.2,0.25},
                xlabel={$R_0(C^i)$},
                ylabel={$i^*$},
                ylabel near ticks,
                xlabel near ticks,
            ]
            \addplot+[
                blue,
                only marks,
                mark size=1.25pt]
            table[x=x, y=y]
            {Simulations_Data/dat3_stable.dat};

            \draw [line width=2.5pt, orange, dashed] (1.05,0.15) to (1.1,0.15);
            \node[below] at (1.075,0.15){$\text{\tiny{Unstable}}$};
            
            \draw [line width=2.5pt, blue] (1.05,0.1) to (1.1,0.1);
            \node[below] at (1.075,0.1){$\text{\tiny{Stable}}$};

            \draw [line width=2.5pt, orange] (0.782492492,0.143826171) to (0.785765766,0.131858265);
        
            \draw [line width=2.5pt, orange] (0.789039039,0.124919609) to (0.792312312,0.11950808);
        
            \draw [line width=2.5pt, orange] (0.795585586,0.114931225) to (0.798858859,0.110902144);
        
            \draw [line width=2.5pt, orange] (0.802132132,0.10726874) to (0.805405405,0.103938753);
        
            \draw [line width=2.5pt, orange] (0.808678679,0.100851167) to (0.811951952,0.097963105);
        
            \draw [line width=2.5pt, orange] (0.815225225,0.09524305) to (0.818498498,0.09266701);
        
            \draw [line width=2.5pt, orange] (0.821771772,0.090216213) to (0.825045045,0.087875626);
        
            \draw [line width=2.5pt, orange] (0.828318318,0.085632985) to (0.831591592,0.08347812);
        
            \draw [line width=2.5pt, orange] (0.834864865,0.081402481) to (0.838138138,0.079398793);
        
            \draw [line width=2.5pt, orange] (0.841411411,0.077460799) to (0.844684685,0.075583068);
        
            \draw [line width=2.5pt, orange] (0.847957958,0.073760846) to (0.851231231,0.071989937);
        
            \draw [line width=2.5pt, orange] (0.854504505,0.070266614) to (0.857777778,0.068587547);
        
            \draw [line width=2.5pt, orange] (0.861051051,0.066949738) to (0.864324324,0.065350476);
        
            \draw [line width=2.5pt, orange] (0.867597598,0.063787298) to (0.870870871,0.062257951);
        
            \draw [line width=2.5pt, orange] (0.874144144,0.060760368) to (0.877417417,0.059292642);
        
            \draw [line width=2.5pt, orange] (0.880690691,0.057853005) to (0.883963964,0.056439813);
        
            \draw [line width=2.5pt, orange] (0.887237237,0.055051527) to (0.890510511,0.053686704);
        
            \draw [line width=2.5pt, orange] (0.893783784,0.052343981) to (0.897057057,0.051022066);
        
            \draw [line width=2.5pt, orange] (0.90033033,0.049719728) to (0.903603604,0.048435791);
        
            \draw [line width=2.5pt, orange] (0.906876877,0.04716912) to (0.91015015,0.04591862);
        
            \draw [line width=2.5pt, orange] (0.913423423,0.044683222) to (0.916696697,0.043461882);
        
            \draw [line width=2.5pt, orange] (0.91996997,0.042253568) to (0.923243243,0.041057256);
        
            \draw [line width=2.5pt, orange] (0.926516517,0.039871921) to (0.92978979,0.03869653);
        
            \draw [line width=2.5pt, orange] (0.933063063,0.03753003) to (0.936336336,0.03637134);
        
            \draw [line width=2.5pt, orange] (0.93960961,0.035219338) to (0.942882883,0.034072849);
        
            \draw [line width=2.5pt, orange] (0.946156156,0.032930626) to (0.949429429,0.031791331);
        
            \draw [line width=2.5pt, orange] (0.952702703,0.030653511) to (0.955975976,0.029515567);
        
            \draw [line width=2.5pt, orange] (0.959249249,0.028375712) to (0.962522523,0.027231923);
        
            \draw [line width=2.5pt, orange] (0.965795796,0.026081866) to (0.969069069,0.024922807);
        
            \draw [line width=2.5pt, orange] (0.972342342,0.023751473) to (0.975615616,0.022563868);
        
            \draw [line width=2.5pt, orange] (0.978888889,0.021354982) to (0.982162162,0.020118352);
        
            \draw [line width=2.5pt, orange] (0.985435435,0.018845346) to (0.988708709,0.017523913);
        
            \draw [line width=2.5pt, orange] (0.991981982,0.016136262) to (0.995255255,0.014653997);
        
            \draw [line width=2.5pt, orange] (0.998528529,0.013026272) to (1.001801802,0.011142452);
            
            \addplot +[red, dashed, mark=none] coordinates {(0.782492492, -0.05) (0.782492492, 0.31)};
            \addplot +[red, dashed, mark=none] coordinates {(1, -0.05) (1, 0.31)};
            \addplot +[red, dashed,mark=none] coordinates {(1.0116, -0.05) (1.0116, 0.31)};
            \node at (0.77,0.32){$\mathcal{R}_1$};
            \node at (0.9,0.32){$\mathcal{R}_2$};
            \node at (1.01,0.32){$\mathcal{R}_3$};
            \node at (1.075,0.32){$\mathcal{R}_4$};
            \end{axis}
        \end{tikzpicture}
     \end{subfigure}

    \caption{Equilibria points computed using $\theta = 1.7$ and varying $\kappa = 0.8, 0.5, 0.3 $ and $0.01$. A cubic bifurcation plot can be found, and three equilibria points occur within an interval $R_0 \in [1,1+\epsilon(\kappa, \theta)]$. We note that decreasing the value of $\kappa$ diminishes the window $\epsilon(\kappa,\theta)$, and it decreases the minimal $R_0$ value for which we find stable equilibria, for example, for $\kappa=0.8$ this value is at $R_0 \approx 0.85$, but for $\kappa=0.01$ it is at $R_0 \approx 0.8$. For a discussion on the length of the window $\epsilon(\kappa,\theta)$, refer to Figure \mbox{\ref{R3_window_plot}} below. Stable and unstable equilibria regions are highlighted in these plots.}
    \label{fig_cubic}

\end{figure}
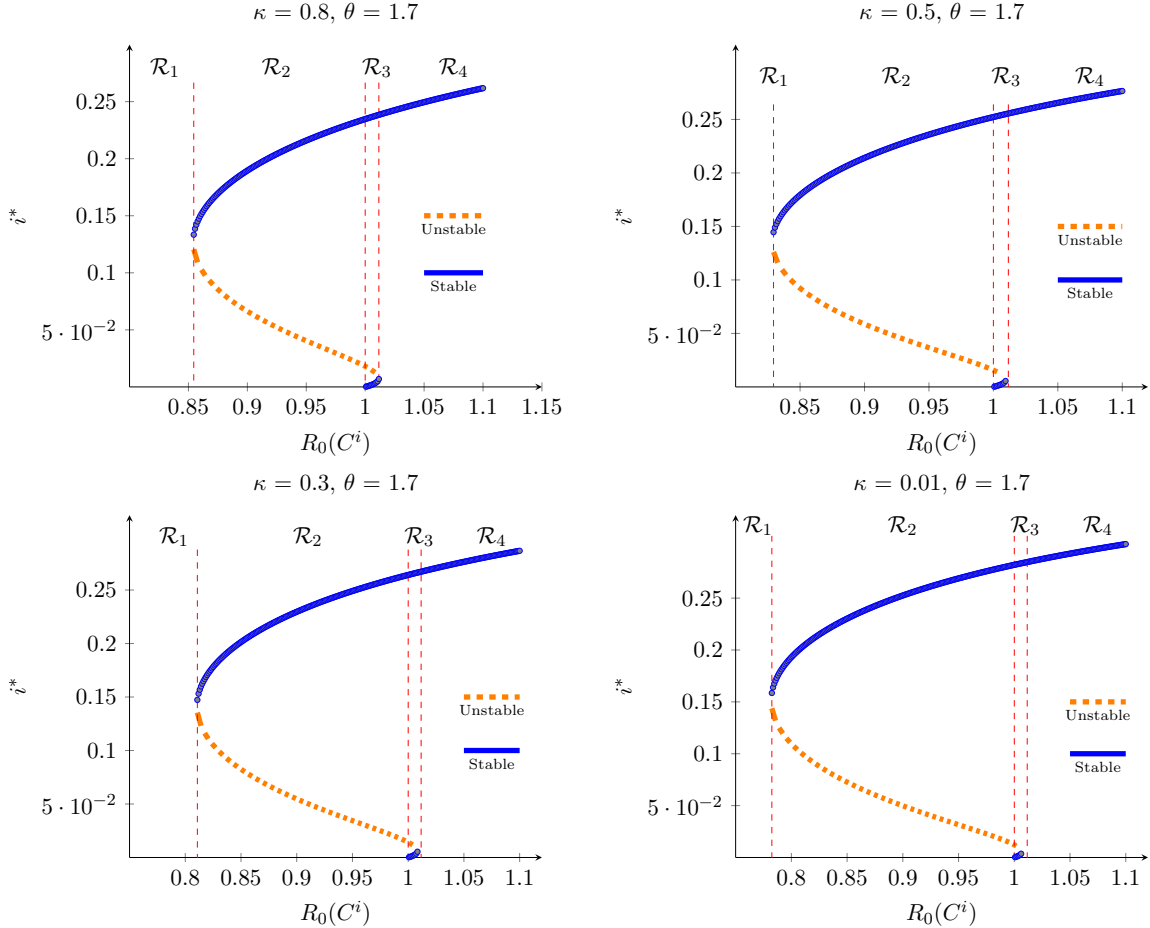

We divided these plots into four regions of interest for the basic reproductive number: $\mathcal{R}_1$ where no endemic equilibrium is attained, $\mathcal{R}_2$ where a stable endemic equilibrium, and another non-stable can be found, $\mathcal{R}_3$ where three possible equilibrium states can be found, one of which is stable, and $\mathcal{R}_4$ where there is just one stable, steady state.

\begin{remark}\label{importance_cubic}
The presence of this cubic phenomenon was first observed in \mbox{\cite{Arr22}}. This case is of particular interest when analyzing the effect of the value of $R_0$ in epidemics with relapse. The small region in the $[1, 1+ \epsilon(\kappa,\theta)]$ interval represents the possibility of having a very small stable equilibrium state of the disease even when the reproductive number is higher than $1$. These simulations suggest the importance of the contact rate $C^r$ in creating such a scenario.
\end{remark} 

On the other hand, decreasing the value of $\theta$ leads to a different behavior, as shown in Figure (\ref{fig_no_cubic}). In this case, we count three regions of interest exhibiting a typical backward quadratic bifurcation plot.

\begin{figure}[H]

    \begin{subfigure}[b]{0.45\textwidth}
         \begin{tikzpicture}[scale=0.7]
            \begin{axis}[
                title = $\kappa \text{ = } 0.8 \text{, } \theta \text{ = } 1.2$ ,
                axis lines=middle,
                cycle list name=black white,
                xmin=0.65,xmax=1.15,ymin=0,ymax=0.3,
                ytick={0.05,0.1,0.15,0.2,0.25},
                xlabel={$R_0(C^i)$},
                ylabel={$i^*$},
                ylabel near ticks,
                xlabel near ticks,
            ]
            \addplot+[
                blue,
                only marks,
                mark size=1.25pt]
            table[x=x, y=y]
            {Simulations_Data/dat4_estable.dat};

            \draw [line width=2.5pt, orange, dashed] (1.05,0.15) to (1.125,0.15);
            \node[below] at (1.0875,0.15){$\text{\tiny{Unstable}}$};
            
            \draw [line width=2.5pt, blue] (1.05,0.1) to (1.125,0.1);
            \node[below] at (1.0875,0.1){$\text{\tiny{Stable}}$};
            
            \draw [line width=2.5pt, orange] (0.696296296,0.129169612) to (0.707207207,0.104650083);

            \draw [line width=2.5pt, orange] (0.718118118,0.092424913) to (0.729029029,0.083264345);
        
            \draw [line width=2.5pt, orange] (0.73993994,0.075740845) to (0.750850851,0.06928163);
        
            \draw [line width=2.5pt, orange] (0.761761762,0.06358666) to (0.772672673,0.058474786);
        
            \draw [line width=2.5pt, orange] (0.783583584,0.053826331) to (0.794494494,0.049557116);
        
            \draw [line width=2.5pt, orange] (0.805405405,0.045605111) to (0.816316316,0.041922932);
        
            \draw [line width=2.5pt, orange] (0.827227227,0.038473318) to (0.838138138,0.035226259);
        
            \draw [line width=2.5pt, orange] (0.849049049,0.032157075) to (0.85995996,0.029245102);
        
            \draw [line width=2.5pt, orange] (0.870870871,0.026472742) to (0.881781782,0.023824769);
        
            \draw [line width=2.5pt, orange] (0.892692693,0.021287797) to (0.903603604,0.018849856);
        
            \draw [line width=2.5pt, orange] (0.914514515,0.016500052) to (0.925425425,0.014228252);
        
            \draw [line width=2.5pt, orange] (0.936336336,0.012024805) to (0.947247247,0.009880235);
        
            \draw [line width=2.5pt, orange] (0.958158158,0.007784903) to (0.969069069,0.005728553);
        
            \draw [line width=2.5pt, orange] (0.97997998,0.003699683) to (0.990890891,0.001684535);
            
            \addplot +[red, dashed, mark=none] coordinates {(0.696296296, -0.05) (0.696296296, 0.3)};
            \addplot +[red, dashed, mark=none] coordinates {(1, -0.05) (1, 0.3)};
            \node at (0.675,0.2){$\mathcal{R}_1$};
            \node at (0.85,0.2){$\mathcal{R}_2$};
            \node at (1.05,0.2){$\mathcal{R}_3$};
            \end{axis}
        \end{tikzpicture}
     \end{subfigure}\hspace{5mm}
    \begin{subfigure}[b]{0.45\textwidth}
         \begin{tikzpicture}[scale=0.7]
            \begin{axis}[
                title = $\kappa \text{ = } 0.5 \text{, } \theta \text{ = } 1.2$ ,
                axis lines=middle,
                cycle list name=black white,
                xmin=0.6,xmax=1.15,ymin=0,ymax=0.32,
                ytick={0.05,0.15,0.15,0.2,0.25},
                xlabel={$R_0(C^i)$},
                ylabel={$i^*$},
                ylabel near ticks,
                xlabel near ticks,
            ]
            \addplot+[
                blue,
                only marks,
                mark size=1.25pt]
            table[x=x, y=y]
            {Simulations_Data/dat5_estable.dat};

            \draw [line width=2.5pt, orange, dashed] (1.05,0.15) to (1.125,0.15);
            \node[below] at (1.0875,0.15){$\text{\tiny{Unstable}}$};
            
            \draw [line width=2.5pt, blue] (1.05,0.1) to (1.125,0.1);
            \node[below] at (1.0875,0.1){$\text{\tiny{Stable}}$};
            
            \draw [line width=2.5pt, orange] (0.667927928,0.144637192) to (0.678838839,0.114279328);

            \draw [line width=2.5pt, orange] (0.68974975,0.101226266) to (0.700660661,0.091515005);
        
            \draw [line width=2.5pt, orange] (0.711571572,0.083561706) to (0.722482482,0.076744748);
        
            \draw [line width=2.5pt, orange] (0.733393393,0.070741694) to (0.744304304,0.065358957);
        
            \draw [line width=2.5pt, orange] (0.755215215,0.060469108) to (0.766126126,0.055982746);
        
            \draw [line width=2.5pt, orange] (0.777037037,0.051834137) to (0.787947948,0.047973165);
        
            \draw [line width=2.5pt, orange] (0.798858859,0.044360508) to (0.80976977,0.040964571);
        
            \draw [line width=2.5pt, orange] (0.820680681,0.037759456) to (0.831591592,0.034723568);
        
            \draw [line width=2.5pt, orange] (0.842502503,0.03183862) to (0.853413413,0.02908891);
        
            \draw [line width=2.5pt, orange] (0.864324324,0.026460775) to (0.875235235,0.023942173);
        
            \draw [line width=2.5pt, orange] (0.886146146,0.021522351) to (0.897057057,0.019191574);
        
            \draw [line width=2.5pt, orange] (0.907967968,0.016940892) to (0.918878879,0.014761934);
        
            \draw [line width=2.5pt, orange] (0.92978979,0.012646708) to (0.940700701,0.010587401);
        
            \draw [line width=2.5pt, orange] (0.951611612,0.008576145) to (0.962522523,0.006604728);
        
            \draw [line width=2.5pt, orange] (0.973433433,0.004664213) to (0.984344344,0.002744357);
            \addplot +[red, dashed, mark=none] coordinates {(0.667927928, -0.05) (0.667927928, 0.3)};
            \addplot +[red, dashed, mark=none] coordinates {(1, -0.05) (1, 0.3)};
            \node at (0.64, 0.25){$\mathcal{R}_1$};
            \node at (0.85,0.25){$\mathcal{R}_2$};
            \node at (1.05,0.25){$\mathcal{R}_3$};
            \end{axis}
        \end{tikzpicture}
     \end{subfigure}
    \begin{subfigure}[b]{0.45\textwidth}
         \begin{tikzpicture}[scale=0.7]
            \begin{axis}[
                title = $\kappa \text{ = } 0.3 \text{, } \theta \text{ = } 1.2$ ,
                axis lines=middle,
                cycle list name=black white,
                xmin=0.6, xmax=1.15,ymin=0,ymax=0.32,
                ytick={0.05,0.1,0.15,0.2,0.25},
                xlabel={$R_0(C^i)$},
                ylabel={$i^*$},
                ylabel near ticks,
                xlabel near ticks,
            ]
            \addplot+[
                blue,
                only marks,
                mark size=1.25pt]
            table[x=x, y=y]
            {Simulations_Data/dat6_estable.dat};

            \draw [line width=2.5pt, orange, dashed] (1.05,0.15) to (1.125,0.15);
            \node[below] at (1.0875,0.15){$\text{\tiny{Unstable}}$};
            
            \draw [line width=2.5pt, blue] (1.05,0.1) to (1.125,0.1);
            \node[below] at (1.0875,0.1){$\text{\tiny{Stable}}$};
            
            \draw [line width=2.5pt, orange] (0.649379379,0.143318808) to (0.66029029,0.119071094);

            \draw [line width=2.5pt, orange] (0.671201201,0.106111418) to (0.682112112,0.096313713);
        
            \draw [line width=2.5pt, orange] (0.693023023,0.088235043) to (0.703933934,0.081284676);
        
            \draw [line width=2.5pt, orange] (0.714844845,0.075149958) to (0.725755756,0.069640856);
        
            \draw [line width=2.5pt, orange] (0.736666667,0.064631229) to (0.747577578,0.060032036);
        
            \draw [line width=2.5pt, orange] (0.758488488,0.055777505) to (0.769399399,0.051817314);
        
            \draw [line width=2.5pt, orange] (0.78031031,0.048111872) to (0.791221221,0.044629305);
        
            \draw [line width=2.5pt, orange] (0.802132132,0.041343455) to (0.813043043,0.038232499);
        
            \draw [line width=2.5pt, orange] (0.823953954,0.03527796) to (0.834864865,0.032463991);
        
            \draw [line width=2.5pt, orange] (0.845775776,0.029776839) to (0.856686687,0.027204426);
        
            \draw [line width=2.5pt, orange] (0.867597598,0.02473603) to (0.878508509,0.02236202);
        
            \draw [line width=2.5pt, orange] (0.889419419,0.020073644) to (0.90033033,0.017862844);
        
            \draw [line width=2.5pt, orange] (0.911241241,0.015722089) to (0.922152152,0.01364422);
        
            \draw [line width=2.5pt, orange] (0.933063063,0.011622294) to (0.943973974,0.009649411);
        
            \draw [line width=2.5pt, orange] (0.954884885,0.007718512) to (0.965795796,0.005822116);
        
            \draw [line width=2.5pt, orange] (0.976706707,0.003951958) to (0.987617618,0.002098439);
            \addplot +[red, dashed, mark=none] coordinates {(0.648288288, -0.05) (0.648288288, 0.3)};
            \addplot +[red, dashed, mark=none] coordinates {(1, -0.05) (1, 0.3)};
            \node at (0.625,0.25){$\mathcal{R}_1$};
            \node at (0.85,0.25){$\mathcal{R}_2$};
            \node at (1.05,0.25){$\mathcal{R}_3$};
            \end{axis}
        \end{tikzpicture}
     \end{subfigure}\hspace{5mm}
    \begin{subfigure}[b]{0.45\textwidth}
         \begin{tikzpicture}[scale=0.7]
            \begin{axis}[
                title = $\kappa \text{ = } 0.01 \text{, } \theta \text{ = } 1.2$ ,
                axis lines=middle,
                cycle list name=black white,
                xmin=0.55,xmax=1.15,ymin=0,ymax=0.33,
                ytick={0.05,0.1,0.15,0.2,0.25},
                xlabel={$R_0(C^i)$},
                ylabel={$i^*$},
                ylabel near ticks,
                xlabel near ticks,
            ]
            \addplot+[
                blue,
                only marks,
                mark size=1.25pt]
            table[x=x, y=y]
            {Simulations_Data/dat7_estable.dat};

            \draw [line width=2.5pt, orange, dashed] (1.05,0.15) to (1.125,0.15);
            \node[below] at (1.0875,0.15){$\text{\tiny{Unstable}}$};
            
            \draw [line width=2.5pt, blue] (1.05,0.1) to (1.125,0.1);
            \node[below] at (1.0875,0.1){$\text{\tiny{Stable}}$};
            
            \draw [line width=2.5pt, orange] (0.618828829,0.150752227) to (0.62973974,0.128281976);

            \draw [line width=2.5pt, orange] (0.640650651,0.11509809) to (0.651561562,0.104980526);
        
            \draw [line width=2.5pt, orange] (0.662472472,0.096581831) to (0.673383383,0.089328682);
        
            \draw [line width=2.5pt, orange] (0.684294294,0.082911683) to (0.695205205,0.077140458);
        
            \draw [line width=2.5pt, orange] (0.706116116,0.071887544) to (0.717027027,0.067062383);
        
            \draw [line width=2.5pt, orange] (0.727937938,0.062597746) to (0.738848849,0.058441994);
        
            \draw [line width=2.5pt, orange] (0.74975976,0.054554372) to (0.760670671,0.050901998);
        
            \draw [line width=2.5pt, orange] (0.771581582,0.047457841) to (0.782492492,0.044199328);
        
            \draw [line width=2.5pt, orange] (0.793403403,0.041107352) to (0.804314314,0.038165537);
        
            \draw [line width=2.5pt, orange] (0.815225225,0.035359701) to (0.826136136,0.032677431);
        
            \draw [line width=2.5pt, orange] (0.837047047,0.030107765) to (0.847957958,0.027640931);
        
            \draw [line width=2.5pt, orange] (0.858868869,0.025268137) to (0.86977978,0.022981404);
        
            \draw [line width=2.5pt, orange] (0.880690691,0.020773412) to (0.891601602,0.01863738);
        
            \draw [line width=2.5pt, orange] (0.902512513,0.016566948) to (0.913423423,0.014556063);
        
            \draw [line width=2.5pt, orange] (0.924334334,0.012598877) to (0.935245245,0.010689628);
        
            \draw [line width=2.5pt, orange] (0.946156156,0.008822501) to (0.957067067,0.006991471);
        
            \draw [line width=2.5pt, orange] (0.967977978,0.005190083) to (0.978888889,0.003411146);
            \addplot +[red, dashed, mark=none] coordinates {(0.616646647, -0.05) (0.616646647, 0.31)};
            \addplot +[red, dashed, mark=none] coordinates {(1, -0.05) (1, 0.31)};
            \node at (0.58,0.25){$\mathcal{R}_1$};
            \node at (0.85,0.25){$\mathcal{R}_2$};
            \node at (1.05,0.25){$\mathcal{R}_3$};
            \end{axis}
        \end{tikzpicture}
     \end{subfigure}

    \caption{Equilibria points computed using $\theta = 1.2$ and we varying $\kappa = 0.8, 0.5, 0.3 $ and $0$. We can see that no $R_0$ allows us to obtain three possible equilibria points.}
    \label{fig_no_cubic}
\end{figure}
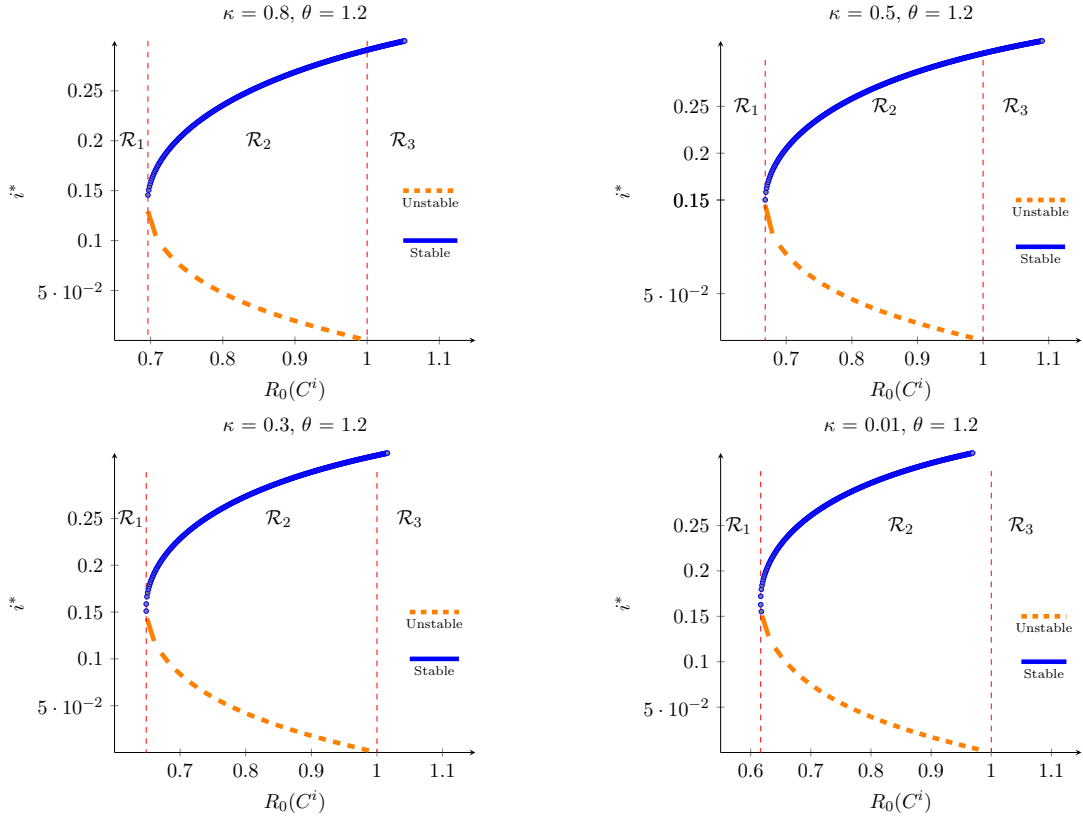

\begin{remark}
From the previous numerical results, we can derive the following conjectures:

\begin{itemize}
    \item[a)] A cubic bifurcation plot can be found for sufficiently high values of $\theta = \frac{C^{r}}{C^s}$, independently of $\kappa$. We observe an interval for $R_0$ in which those three equilibria points can be found, and it depends on $(\theta, \kappa)$.
    \item[b)] For cubic bifurcation plots, a sensible region $[1, 1+ \epsilon]$ could attain an endemic stable or a small non-stable equilibrium.
    \item[c)] When $\theta$ is small, there is no cubic behavior for any $\kappa$, and for all $R_0>1$, there is only one possibility for an endemic equilibrium.
\end{itemize}
\end{remark}

\subsection{Theoretical Results} 

Based on our previous simulations, we would like to formalize the conditions for the existence of regions that provide us with such cubic behavior. For that, we propose the following result.

\begin{theorem}\label{main_theorem}
Let $\mu,\gamma,\phi$ be positive real numbers. Define $R_\mu$ and $R_\phi$ as in (\ref{rphirmu}) and suppose that \begin{equation}\label{inequality}
    R_{\phi} > \frac{1+ R_\mu^2}{(1-R_\mu )^2}.
\end{equation} 
Then, there exist $0< \theta_1 <\theta_2$ such that for every $\theta \in [\theta_1,\theta_2]$ and every $\kappa \in [0,1]$, there is an $R_0>0$ such that the polynomial equation $a_0 + a_1 X + a_2 X^2 + a_3 X^3 = 0$ where $a_0, \cdots, a_3$ are defined by (\ref{coefficients}), has three distinct real roots in the  interval $[0,1]$. Moreover, for each pair $(\kappa, \theta)$, this hold for all $R_0$ in a neighborhood of the form $[1, 1+\epsilon({\theta})]$.
\end{theorem}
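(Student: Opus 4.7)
The plan is to pass to the boundary case $R_0 = 1$ first: there $a_0 = R_\mu^2(1-R_0)$ vanishes, so the cubic $P(X) = a_3 X^3 + a_2 X^2 + a_1 X + a_0$ factors as $P(X) = X \cdot Q(X)$ with $Q(X) = a_3 X^2 + a_2 X + a_1$. To obtain three distinct roots in $[0,1]$ at $R_0 = 1$ it suffices for $Q$ to have two distinct roots $0 < X_1 < X_2 < 1$. Granting this, continuity of simple roots of a polynomial in its coefficients (equivalently the implicit function theorem applied at each simple root) gives the promised window: for $R_0 = 1 + \eta$ with $\eta > 0$ small, the roots $X_1, X_2$ perturb to simple real roots still in $(0,1)$, while the root at $0$ perturbs to $\approx R_\mu^2 \eta / a_1(1) > 0$, which is positive and small. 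This yields the neighborhood $[1, 1 + \epsilon(\kappa,\theta)]$ claimed by the theorem.

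The analytic work thus concentrates on $Q$ at $R_0 = 1$. I would phrase the two-roots-in-$(0,1)$ property as the conjunction of five open inequalities in $(\kappa, \theta)$: (a) $a_3(1) > 0$, which is automatic since $a_3(1) = R_\phi^2\bigl(1 - R_\mu(1-\kappa)\bigr) \geq R_\phi^2(1-R_\mu) > 0$ uniformly in $\kappa \in [0,1]$; (b) $a_1(1) > 0$ (positive product of roots); (c) $a_2(1) < 0$ (positive sum of roots, and together with $a_2(1) + 2 a_3(1) > 0$ places the vertex $-a_2/(2a_3)$ inside $(0,1)$); (d) $Q(1) = a_3(1) + a_2(1) + a_1(1) > 0$ (both roots below $1$, given the vertex location); and (e) the discriminant $\Delta(\kappa,\theta) := a_2(1)^2 - 4 a_3(1) a_1(1) > 0$ (realness and distinctness). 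A direct substitution from (\ref{coefficients}) shows that (b), (c), and (d) are each \emph{affine} in $\theta$ with coefficients depending continuously on $\kappa$, so each carves out an open half-line $H_\bullet(\kappa)$; their intersection is an open interval $\mathcal{I}(\kappa)$ that depends continuously on $\kappa$.

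The main obstacle will be the discriminant inequality (e), which is genuinely quadratic in $\theta$ and must overlap with $\mathcal{I}(\kappa)$ uniformly in $\kappa \in [0,1]$. This is where the hypothesis $R_\phi > (1+R_\mu^2)/(1-R_\mu)^2$ is consumed: I expect it to be precisely the sharp condition under which, at the worst value of $\kappa$ in $[0,1]$ (plausibly one of the endpoints, where the algebra is cleanest), the quadratic $\Delta(\kappa, \cdot)$ admits two real roots $\theta^-(\kappa) < \theta^+(\kappa)$ whose interval $(\theta^-(\kappa), \theta^+(\kappa))$ meets $\mathcal{I}(\kappa)$ in a non-empty open sub-interval. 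I would first verify this at a convenient endpoint of $\kappa$, extract the resulting constraint on $R_\phi$, and check that it reproduces the bound $(1+R_\mu^2)/(1-R_\mu)^2$. Joint continuity in $(\kappa, \theta)$ combined with compactness of $\kappa \in [0,1]$ then allows me to shrink to a uniform sub-interval $[\theta_1, \theta_2]$ on which all five inequalities hold simultaneously for every $\kappa \in [0,1]$; combining this with the continuation step in $R_0$ from the first paragraph completes the argument.
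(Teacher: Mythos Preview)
Your overall strategy is sound and genuinely different from the paper's. The paper does not factor at $R_0=1$ and perturb; it applies Sturm's theorem (using the sequence of higher derivatives of the cubic as a Sturm chain) directly for $R_0>1$, reducing everything to the five \emph{linear} sign conditions $a_0<0$, $a_2<0$, $a_3>0$, $a_0+a_1>0$, $a_2+a_3>0$, which it then arranges as three lines in the $(R_0,\theta)$--plane and shows are simultaneously satisfiable just to the right of $R_0=1$. Your route is more elementary in that it avoids Sturm theory entirely, at the cost of having to handle the discriminant and the perturbation step explicitly; the paper's route avoids both of those but leans on the Sturm machinery (and, incidentally, on a tacit assumption that the cubic already has simple real roots so that the derivative sequence is a Sturm chain).

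Your plan has one substantive misdirection that would cost you time: the hypothesis $R_\phi>(1+R_\mu^2)/(1-R_\mu)^2$ is \emph{not} what makes the discriminant (e) positive. It is precisely the condition that makes the two affine half-lines (b) $a_1(1)>0$ and (c) $a_2(1)<0$ overlap in $\theta$, uniformly over $\kappa\in[0,1]$. Both $a_1$ and $a_2$ at $R_0=1$ are increasing affine functions of $\theta$, so (b) reads $\theta>\theta_3^*(\kappa)$ and (c) reads $\theta<\theta_1^*(\kappa)$; expanding $\inf_\kappa\theta_1^*(\kappa)>\sup_\kappa\theta_3^*(\kappa)$ is exactly the stated bound on $R_\phi$. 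Once you sit inside that band near its lower edge $\theta_3^*$, the discriminant comes for free: as $a_1\to 0^+$ with $a_2$ bounded away from $0$, $\Delta=a_2^2-4a_1a_3\to a_2^2>0$. Likewise your condition (d) $Q(1)>0$ reduces there to $a_2+a_3>0$, which the paper handles via a third affine threshold $\theta_2^*(\kappa)<\theta_1^*(\kappa)$ (this one holds without any hypothesis, since $R_\mu<1$). So redirect your algebra: extract the hypothesis from the compatibility of (b) and (c), and then observe that (d) and (e) hold automatically on a sub-band near $\theta_3^*$.
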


In other words, there is a range of the fraction $\theta = \frac{C^{r}}{C^s}$ which yields a cubic bifurcation plot, under our condition (\ref{inequality}), independently of the value of $\kappa = \frac{C^i}{C^s}$. We present the proof of this theorem, which uses the algebraic theory of Sturm chains. A preamble for this theory can be found in the Appendix.

\begin{proof}

Let us assume that the polynomial $f(X)$ has three different real roots. In this case, as discussed in proposition (\ref{sturm_norepeated_roots}), the sequence of higher derivatives of $f(X)$ forms a Sturm sequence on any interval. This sequence is then
\begin{align*}
    [a_0 + a_1 X + a_2 X^2 + a_3 X^3, 
    a_1 + 2a_2X + 3a_3X^2, 2a_2 + 6a_3X, 6a_3].
\end{align*}

Its values at $x=0$ and $x = 1$ are respectively $(a_0,a_1,2a_2,6a_3)$ and $(a_0 + a_1 + a_2 + a_3, a_1 + 2a_2 + 3a_3, 2a_2 + 6a_3, 6a_3)$. Our goal will be to find values for $R_0$ for which the signs of these sequences are $-,+,-,+$ at $x=0$ and $+,+,+,+$ at $x=1$. By Proposition (\ref{sturm_norepeated_roots}), this would prove the existence of three different roots in the interval $[0,1]$, since in this case $V_S(0)=3$ and $V_S(1)=0$. The reader can verify that for this to happen it is enough to have:
\begin{equation}\label{ineq_needed}
    a_0 < 0, \quad a_2 < 0, \quad a_3 > 0, \quad a_0 + a_1 > 0, \quad \text{ and } a_2 + a_3 > 0.
\end{equation}
Note that if $R_0>1$ then $a_0 < 0$ and if $R_0 > R_\mu$, then for all $\kappa \in [0,1]$ we have 
\begin{align*}
    a_3 = R_\phi^2 [ R_0 - R_\mu(1-\kappa)] > R_\phi^2 [ R_0 - R_\mu] > 0.
\end{align*}
Because $R_\mu < 1$, then $R_0>1$ is sufficient to ensure that $a_0 < 0$ and $a_3 > 0$. 

We now move to a coordinate plane with $R_0$ in the $x$-axis and $\theta$ in the $y$-axis. The coefficients $a_0,\ldots,a_3$ can be seen as linear equations in this plane, given by
\begin{align*}
    & a_3 = AR_0 - B(1-\kappa) \\
    & a_2 = C + DR_0 - E(1-\kappa) - F(1-\theta) \\
    & a_1 = G + HR_0 - I(1-\kappa) - J(1-\theta) \\
    & a_0 = K(1-R_0),
\end{align*}
where the constants $A,B,\cdots, K$ depend only on $R_\phi$ and $R_\mu$ and are given by
\begin{align*}
    & A := R_{\phi}^2, \quad B := R_{\mu}R_{\phi}^2, \quad C := R_{\phi}^2R_{\mu}, \quad D := R_{\phi}(1-R_{\phi}) + R_{\phi}R_{\mu}, \\
    & E := R_{\phi}R_{\mu}(1+R_{\mu}), \quad F := R_{\phi}R_{\mu}(1-R_{\mu}), \quad G := R_{\phi}R_{\mu}(1+R_{\mu})\\ 
    & H := R_{\mu}(1-R_{\phi}) - R_{\mu}R_{\phi}, \quad I := R_{\mu}^2, \quad J := R_{\mu}(1-R_{\mu}) \quad K := R_{\mu}^2.
\end{align*}
Note that all constants are positive, except perhaps $D$ and $H$  (we don't know the sign of $1 - R_\phi$). 

Geometrically, the inequalities in (\ref{ineq_needed}) refer to an area that is below the line $\ell_1 := \{a_2 = 0 \}$ and above the lines $ \ell_2 := \{a_2 + a_3 = 0\}$ and $\ell_3 : = \{a_0 + a_1 = 0\}$ in the $(R_0, \theta)$ plane. Consider the intersections of these three lines with the $R_0 = 1$ vertical line. If we prove that the $\theta$ coordinate of the intersection of the $\ell_1$ with this vertical axis is bigger than the $\theta$ coordinates of intersections of the other two ($\ell_2$ and $\ell_3$), then there would be an interval to the right of $R_0 = 1$ which is below the line $\ell_1$ and above both $\ell_1$ and $\ell_3$. See the next figure for a visual intuition.

\begin{figure}[H]
    \centering
    \begin{tikzpicture}
    \begin{axis}[
        ticks=none,
        axis y line=middle,
        axis x line=middle,
        every axis x label/.style={at={(current axis.right of origin)},anchor=west},
        every axis y label/.style={at={(current axis.north west)},above=2mm},
        minor tick num=2,
        xlabel=$R_0$,
        ylabel=$\theta$,
        xmin=-0.5,
        ymax= 7
    ]
    
    \addplot [
        domain=0:3, 
        samples=100, 
        color=blue,
        name path=A
        ]
        {5.7-1.5*x};
    \addlegendentry{\(\ell_1\)};
    
    \addplot [
        domain=0:3, 
        samples=100, 
        color=red,
        name path=B
    ]
    {3.6-2*x};
    \addlegendentry{\(\ell_2\)}
    
    \addplot [
        domain=0:3, 
        samples=100, 
        color=green,
        name path=C
        ]
        {-0.5+1.2*x};
    \addlegendentry{\(\ell_3\)};
    
    \draw [dashed] (1, -2.5) -- (1, 6);
    \node (B) at (1,-3) {$R_0 = 1$};
    
    \addplot +[mark=none] coordinates {(1, -3.5) (1, -4)};
    
    \path[draw,pattern=north east lines] (1,4.2) -- (1,1.65) -- (1.27,1.1) -- (2.3,2.26) ;
    
    \node (B) at (2.3,-3) {$R_0 = 1+ \epsilon_{\theta}$};
    \draw [dashed] (2.3, -2.5) -- (2.3, 6);
    \draw [solid] (2.3, -3.5) -- (2.3, -4);

    \draw [dotted] (1, 4.2) -- (0, 4.2);
    \node (B) at (-0.2,4.2) {$\theta_1^*$};
    
    \draw [dotted] (1, 1.6) -- (0, 1.6);
    \node (B) at (-0.2,1.6) {$\theta_2^*$};
    
    \draw [dotted] (1, 0.7) -- (0, 0.7);
    \node (B) at (-0.2,0.7) {$\theta_3^*$};

    \end{axis}
    \end{tikzpicture}
\end{figure}

This is a specific example, and we don't know the signs of the slopes of lines $\ell_1,\ell_2$, and $\ell_3$. However, their intersections at $R_0 = 1$ define the existence of a region below $\ell_1$ and above both $\ell_2$ and $\ell_3$ in a neighborhood of $R_0 = 1$.

The intersection of line $\ell_1$ at $R_0 = 1$ gives us $$\theta_1^* = 1 - \frac{C+D}{F} + \frac{E}{F}(1-\kappa),$$ and the intersection of line $\ell_2$ at $R_0=1$ gives us $$\theta_2^* = 1 - \frac{C+D}{F} + \frac{E}{F}(1-\kappa) + \frac{B(1-\kappa) - A}{F} < \theta_1^* + \frac{B-A}{F},$$ we note that $B-A = R_\mu R_\phi^2 - R_\phi^2 = R_\phi^2(R_\mu - 1) <0$, so we indeed have $\theta_2^* < \theta_1^*$.

The $\theta$ coordinate of the intersection between $\ell_3$ and $R_0 = 1$ is $$\theta_3^* = 1 - \frac{G+H}{J} + \frac{I}{J}(1-\kappa) < 1 - \frac{G+H}{J} + \frac{I}{J}.$$

We observe that $\theta_1^* > 1- \frac{C+D}{F}$ for all $\kappa \in [0,1]$, so we would need

\begin{align*}
    1- \frac{C+D}{F} > 1 - \frac{G+H}{J} + \frac{I}{J},
\end{align*}

to guarantee $\theta_3^* < \theta_1^*$ for all $\kappa$. This is equivalent to $(G+H-I)F > (C+D)J$, and by expanding this expression, we get the inequality (\ref{inequality}) from our statement. Note that this region can be obtained independently of $\kappa$. Inequality (\ref{inequality}) will imply that $\frac{C+D}{F}<1$ and $\frac{G+H-I}{J}<1$, and these are readily seen to imply that $\theta_i^*>0$ for $i=1,3$. Therefore the desired interval for $\theta$ can be taken between $\max\{\theta_3^*,\theta_2^*\}$ and $\theta_1^*$.

This proves that when (\ref{inequality}) is true, and our polynomial $f(X)$ doesn't have repeated roots, there is a region in the $(R_0,\theta)$ plane for which $f(X)$ has three real distinct roots in the interval $[0,1]$. 
\end{proof}

\begin{remark}
We note that both $R_0$ and the coefficients $a_0,\cdots,a_3$ are dependent, simultaneously, on the values of $C^h$. If we fix the values of $\kappa$ and $\theta$, all results will depend only on one of the $C^h$'s.
\end{remark}

In the next section, we will explore numerical results related to this theorem. Our examples concern the $C^i<C^s$ scenario ($\kappa<1$). In this case, we assume that infection decreases contacts, which is intuitive. The other option, $\kappa>1$, makes biological sense when infected populations are large because, under those circumstances, susceptible individuals may be overly cautious about engaging in contact with others \cite{CCast13}. The following theorem shows that the case $\kappa>1$ is more stable. Because of this, we focus on the $\kappa<1$ scenario from now on.

\begin{theorem}\label{nocycles}
    If $C^i>C^s$ then (\ref{model1}) has no limit cycles in the region $\{(i,r) \in \bR^2: i>0, r>0\}$.
\end{theorem}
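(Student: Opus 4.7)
The plan is to apply the Dulac--Bendixson criterion to a planar reduction of the system. Because $N = S + I + R$ is conserved, the dynamics of (\ref{model1}) descend to the invariant simplex, which in the rescaled variables (\ref{model}) is parameterised by $(i,r)$ with $s = 1 - i - r$. The face $\{s = 0\}$ is repelling since $\dot s|_{s=0} = \mu > 0$, and the axes $\{i = 0\}$, $\{r = 0\}$ cannot be crossed inward from the interior, so the open triangle
\begin{equation*}
\Omega := \{(i,r) \in \bR^2 : i > 0,\, r > 0,\, i + r < 1\}
\end{equation*}
is positively invariant and simply connected. Any limit cycle of (\ref{model1}) must project into $\Omega$, so it suffices to rule out closed orbits of the planar field $(P, Q)$ given by (\ref{modeleqn2})--(\ref{modeleqn3}) in $\Omega$.

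As Dulac function I would take $B(i,r) := 1/(s\,i)$, which is smooth and strictly positive on $\Omega$. Writing $D := s C^s + i C^i + r C^r$ so that $g = C^s C^i / D$, one computes
\begin{align*}
B P &= \beta g + \frac{\phi r - (\gamma + \mu)}{s}, \qquad
B Q = \frac{\gamma - \phi r}{s} - \frac{\mu r}{s i}.
\end{align*}
Using $\partial_i s = \partial_r s = -1$, $\partial_i D = C^i - C^s$, and $\partial_r D = C^r - C^s$, direct differentiation followed by a clean cancellation of the $\phi r / s^2$ and $\gamma / s^2$ contributions produces
\begin{align*}
\partial_i(BP) + \partial_r(BQ) = -\frac{\beta\, C^s C^i\,(C^i - C^s)}{D^2} - \frac{\mu}{s^2} - \frac{\phi}{s} - \frac{\mu}{s\,i} - \frac{\mu r}{s^2 i}.
\end{align*}
The last four summands are strictly negative on $\Omega$ because $s, i, r, \mu, \phi > 0$ there, while the first summand is strictly negative precisely under the hypothesis $C^i > C^s$. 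Hence the Dulac divergence is strictly negative throughout the simply connected region $\Omega$, and the Dulac--Bendixson criterion prohibits any closed orbit.

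The only delicate step is the choice of Dulac function: naive candidates such as $B \equiv 1$ or $B = 1/r$ leave sign-indeterminate contributions after differentiation, whereas the factor $1/(s\,i)$ is exactly what kills the $\phi r$ and $\gamma$ cross terms and isolates the single sign-sensitive piece $-\beta C^s C^i (C^i - C^s)/D^2$ whose negativity is controlled by the hypothesis $C^i > C^s$. Once $B$ is chosen, the remainder of the argument is a routine computation.
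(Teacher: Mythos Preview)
Your proof is correct. Both you and the paper apply the Dulac--Bendixson criterion to the planar $(i,r)$ reduction, but with \emph{different} Dulac multipliers: the paper takes $B(i,r)=1/(ir)$, whereas you take $B(i,r)=1/(si)$ with $s=1-i-r$. With the paper's choice one obtains
\[
\partial_i\!\left(\frac{P}{ir}\right)+\partial_r\!\left(\frac{Q}{ir}\right)
=\Bigl(\tfrac{\partial g}{\partial i}-\tfrac{\partial g}{\partial s}\Bigr)\,\beta\,\frac{s}{r}\;-\;\frac{\beta g}{r}\;-\;\frac{\gamma}{r^2},
\]
and the sign-sensitive factor is again $g_i-g_s=-C^sC^i(C^i-C^s)/D^2$, negative exactly when $C^i>C^s$. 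Your multiplier produces more residual negative terms ($-\mu/s^2$, $-\phi/s$, $-\mu/(si)$, $-\mu r/(s^2 i)$) after a slightly longer cancellation, while the paper's choice makes the $r$-derivative trivial ($\partial_r(\gamma/r-\phi-\mu/i)=-\gamma/r^2$) at the cost of keeping the product $g\cdot s$ inside the $i$-derivative. Either route isolates the same obstruction $C^i-C^s$; your version is arguably more explicit about every negative contribution, while the paper's is shorter. Your preliminary remark that any periodic orbit of (\ref{model1}) must project into the open triangle $\Omega$ is also a small improvement in presentation, since the paper's proof silently restricts to $\{i+r<1\}$ without comment.
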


\begin{proof}
We use a similar technique as in \cite{Blythe91}. Writing $s=1-i-r$, we obtain the two-variable system:
\begin{align}\label{model2D}
    & \frac{di}{dt}= g(1-i-r,i,r)\beta i(1-i-r) + \phi r i-(\gamma+\mu)i = g_1(i,r) \nonumber \\
    & \frac{dr}{dt}= \gamma i - \phi r i - \mu r = g_2(i,r).
\end{align}

Then we have that 
\begin{align*}
    &\frac{g_1(i,r)}{ir} = g(1-i-r,i,r)\beta \left(\frac{1-i-r}{r}\right) + \phi - (\gamma+\mu)\frac{1}{r},\\
    &\frac{g_2(i,r)}{ir} = \frac{\gamma}{r} - \phi  - \frac{\mu}{i}.\\
\end{align*}

This implies that
\begin{align*}
    &\frac{\partial}{\partial i}\left( \frac{g_1(i,r)}{ir} \right) + \frac{\partial}{\partial r}\left( \frac{g_2(i,r)}{ir}\right) \\
    &= 
    \left[\left(\frac{\partial g}{\partial i} - \frac{\partial g}{\partial s}\right)\beta \left(\frac{1-i-r}{r}\right)\right] -\frac{\beta g(1-i-r,i,r)}{r} - \frac{\gamma}{r^2}.
\end{align*}

This is negative when $\frac{\partial g}{\partial i} 
 - \frac{\partial g}{\partial s}<0$, and using the definition of $g(\cdot)$ given by (\ref{C_orig}), this is equivalent to $C^s<C^i$. Applying the Dulac criterion, we obtain the non-existence of limit cycles in the region $\{(i,r) \in \bR^2, i>0, r>0, i+r < 1 \}$ for this system of differential equations.
\end{proof}

\section{Numerical Results}\label{section4}

\subsection{Stable equilibrium points}

We explore the equilibrium results in a simulation of disease scenarios. Let us consider Figure (\ref{fig_cubic}) with the case $\kappa = 0.8$ and $\theta =1.7$ (and all the other model parameters as in that example). Using $C^i=3$ we obtain $R_0 \approx 1.01057$. This $R_0$ is found in the $\mathcal{R}_3$ region in the bifurcation plot. Solving the corresponding cubic equation, we obtain three possible theoretical equilibrium points: $i^* \in \{i_1^* = 0.004914, i_2^* = 0.010455, i_3^* = 0.238099\}$.

Of these possibilities, $i_3^*$ and $i_1^*$  are asymptotically stable equilibrium points. The system could converge to each point depending on its initial conditions. The middle point, which is unstable, actually works as a threshold value as solutions drift away from it. If we take initial conditions $a_0 = (S(0),I(0),R(0))$ with $N= S(0) + I(0) + R(0)$, and let $i(0) = \frac{I(0)}{N}$, then the highest equilibrium will attract all solutions when $i(0)>i_2^*$, otherwise it is the lowest equilibrium to which the system converges.

The following graphs show cases for convergence to each equilibrium point. Figure (\ref{eqpoints}) displays $i(t)$ through time using initial conditions $a_0 = (N- \rho N - 10, \rho N, 10)$, where $\rho \in [0,1]$ and $N=10000$. On the left are some simulations using $\rho>i_2^*$, in which the system converges to $i_3^*$, the highest equilibrium possible. On the right, a system is solved with $\rho<i_2^*$, where the final point obtained is $i_1^*$, although with a much slower convergence rate. We included the bifurcation plot on the left, highlighting the region of interest.

\begin{figure}[H]
    \hspace{-20mm}
    \begin{subfigure}[b]{0.25\textwidth}
         \begin{tikzpicture}[scale=0.7]
            \begin{axis}[
                title = $\kappa \text{ = } 0.8 \text{, } \theta \text{ = } 1.7$ ,
                axis lines=middle,
                cycle list name=black white,
                xmin=0.8,xmax=1.15,ymin=0,ymax=0.3,
                ytick={0.05,0.1,0.2},
                xlabel={$R_0(C^i)$},
                ylabel={$i^*$},
                ylabel near ticks,
                xlabel near ticks,
            ]
            \draw[color=gray!5, fill = gray!50] (1,0) rectangle (1.0125,0.27);
            \addplot+[
                blue,
                only marks,
                mark size=1.25pt]
            table[x=x, y=y]
            {Simulations_Data/dat0.dat};
            \addplot +[red, dashed, mark=none] coordinates {(0.854504505, -0.05) (0.854504505, 0.27)};
            \addplot +[red, dashed, mark=none] coordinates {(1, -0.05) (1, 0.27)};
            \addplot +[red, dashed,mark=none] coordinates {(1.0116, -0.05) (1.0116, 0.27)};
            \node at (0.83,0.28){$\mathcal{R}_1$};
            \node at (0.925,0.28){$\mathcal{R}_2$};
            \node at (1.01,0.28){$\mathcal{R}_3$};
            \node at (1.075,0.28){$\mathcal{R}_4$};

            \addplot +[green, dashed, mark=none, ultra thick] coordinates {(0,0.238099) (1.1, 0.238099)};
            \node at (1.1, 0.22){$i_3^*$};

            \addplot +[green, dashed, mark=none, ultra thick] coordinates {(0,0.01455) (1.03, 0.01455)};
            \node at (1.05, 0.017){$i_2^*$};

            \addplot +[green, dashed, mark=none, ultra thick] coordinates {(0,0.004914) (1.1, 0.004914)};
            \node at (1.12, 0.007){$i_1^*$};
            
            \end{axis}
        \end{tikzpicture}
     \end{subfigure}\hspace{20mm}
    \begin{subfigure}[b]{0.25\textwidth}
        \begin{tikzpicture}[scale=0.7]
            \begin{axis}[
                axis lines=middle,
                cycle list name=black white,
                ymax = 0.3,
                xmax = 65000,
                xmin=-6000,
                xlabel={$t$},
                ylabel={$i(t)$},
                ylabel near ticks,
                xlabel near ticks,
            ]
            \addplot+[
                color=violet,
                mark size=1.25pt,
                mark=*,
                mark options={fill=violet!50}
                ]
            table[x=t, y=y1]
            {Comparison_Data/plot_eq1.dat};
            \addplot+[
                color=blue,
                mark size=1.25pt,
                mark=*,
                mark options={fill=blue!50}
                ]
            table[x=t, y=y2]
            {Comparison_Data/plot_eq1.dat};
            \addplot+[
                color=cyan,
                mark size=1.25pt,
                mark=*,
                mark options={fill=cyan!50}]
            table[x=t, y=y3]
            {Comparison_Data/plot_eq1.dat};
            \addplot+[
                color=gray,
                mark size=1.25pt,
                mark=*,
                mark options={fill=gray!50}]
            table[x=t, y=y5]
            {Comparison_Data/plot_eq1.dat};
            \addplot+[
                color=orange,
                mark size=1.25pt,
                mark=*,
                mark options={fill=orange!50}]
            table[x=t, y=y7]
            {Comparison_Data/plot_eq1.dat};
            \addplot+[
                color=pink!75,
                mark size=1.25pt,
                mark=*,
                mark options={fill=pink!50}]
            table[x=t, y=y9]
            {Comparison_Data/plot_eq1.dat};
            \addplot +[red, dashed, mark=none] coordinates {(0,0.238099) (55000, 0.238099)};
            \node at (-3500, 0.23){$i_3^*$};

            \addplot +[violet, solid, mark=none, thick] coordinates {(32500,0.051) (37000,0.051)};
            \node[right] at (37500, 0.05){\tiny $\rho = 0.03574$};

            \addplot +[blue, solid, mark=none, thick] coordinates {(32500,0.066) (37000,0.066)};
            \node[right] at (37500, 0.065){\tiny $\rho = 0.0610$};

            \addplot +[cyan, solid, mark=none, thick] coordinates {(32500,0.081) (37000,0.081)};
            \node[right] at (37500, 0.08){\tiny $\rho = 0.0863$};

            \addplot +[yellow!50!black!80, solid, mark=none, thick] coordinates {(32500,0.096) (37000,0.096)};
            \node[right] at (37500, 0.095){\tiny $\rho = 0.1369$};

            \addplot +[orange, solid, mark=none, thick] coordinates {(32500,0.1101) (37000,0.1101)};
            \node[right] at (37500, 0.11){\tiny $\rho = 0.1875$};

            \addplot +[pink!75, solid, mark=none, thick] coordinates {(32500,0.1251) (37000, 0.1251)};
            \node[right] at (37500, 0.125){\tiny $\rho = 0.2381 \approx i_3^*$};
            \end{axis}
        \end{tikzpicture}
    \end{subfigure}\hspace{15mm}
    \begin{subfigure}[b]{0.25\textwidth}
        \begin{tikzpicture}[scale=0.7]
            \begin{axis}[
                axis lines=middle,
                xmin=-60000,
                xmax=650000,
                ymax=0.0106,
                xlabel={$t$},
                ylabel={$i(t)$},
                ylabel near ticks,
                xlabel near ticks,
            ]
            \addplot+[
                color=violet,
                mark size=1.25pt,
                mark=*,
                mark options={fill=violet!50}
                ]
            table[x=t, y=y1]
            {Comparison_Data/plot_eq2.dat};
            \addplot+[
                color=blue,
                mark size=1.25pt,
                mark=*,
                mark options={fill=blue!50}
                ]
            table[x=t, y=y2]
            {Comparison_Data/plot_eq2.dat};
            \addplot+[
                color=cyan,
                mark size=1.25pt,
                mark=*,
                mark options={fill=cyan!50}]
            table[x=t, y=y3]
            {Comparison_Data/plot_eq2.dat};
            \addplot+[
                color=gray,
                mark size=1.25pt,
                mark=*,
                mark options={fill=gray!50}]
            table[x=t, y=y5]
            {Comparison_Data/plot_eq2.dat};
            \addplot+[
                color=orange,
                mark size=1.25pt,
                mark=*,
                mark options={fill=orange!50}]
            table[x=t, y=y7]
            {Comparison_Data/plot_eq2.dat};
            \addplot+[
                color=pink!75,
                mark size=1.25pt,
                mark=*,
                mark options={fill=pink!50}]
            table[x=t, y=y9]
            {Comparison_Data/plot_eq2.dat};
            \addplot +[red, dashed, mark=none] coordinates {(0,0.004914) (550000,0.004914)};
            \node at (-35000, 0.004914){$i_1^*$};

            \addplot +[violet, solid, mark=none, thick] coordinates {(325000,0.0071) (370000,0.0071)};
            \node[right] at (375000, 0.007){\tiny $\rho = 0.00125$};

            \addplot +[blue, solid, mark=none, thick] coordinates {(325000,0.0076) (370000,0.0076)};
            \node[right] at (375000, 0.0075){\tiny $\rho = 0.0024$};

            \addplot +[cyan, solid, mark=none, thick] coordinates {(325000,0.0081) (370000,0.0081)};
            \node[right] at (375000, 0.008){\tiny $\rho = 0.0036$};

            \addplot +[yellow!50!black!80, solid, mark=none, thick] coordinates {(325000,0.0086) (370000,0.0086)};
            \node[right] at (375000, 0.0085){\tiny $\rho = 0.0058$};

            \addplot +[orange, solid, mark=none, thick] coordinates {(325000,0.0091) (370000,0.0091)};
            \node[right] at (375000, 0.009){\tiny $\rho = 0.0081$};

            \addplot +[pink!75, solid, mark=none, thick] coordinates {(325000,0.0096) (370000,0.0096)};
            \node[right] at (375000, 0.0095){\tiny $\rho = 0.010455 \approx i_2^*$};
            \end{axis}
        \end{tikzpicture}
    \end{subfigure}
    \caption{Convergence of $i(t)$ to the equilibrium point $i^*$. On the left, the bifurcation plot was obtained for this case, with region $\mathcal{R}_3$ highlighted. The center plot shows cases of convergence to the maximum possible equilibrium point within this region. This happens when the initially infected proportion is high enough. The plot on the right shows cases of convergence towards the smallest equilibrium point in this region, obtained for sufficiently small values of $i(0)$.}
    \label{eqpoints}
\end{figure}
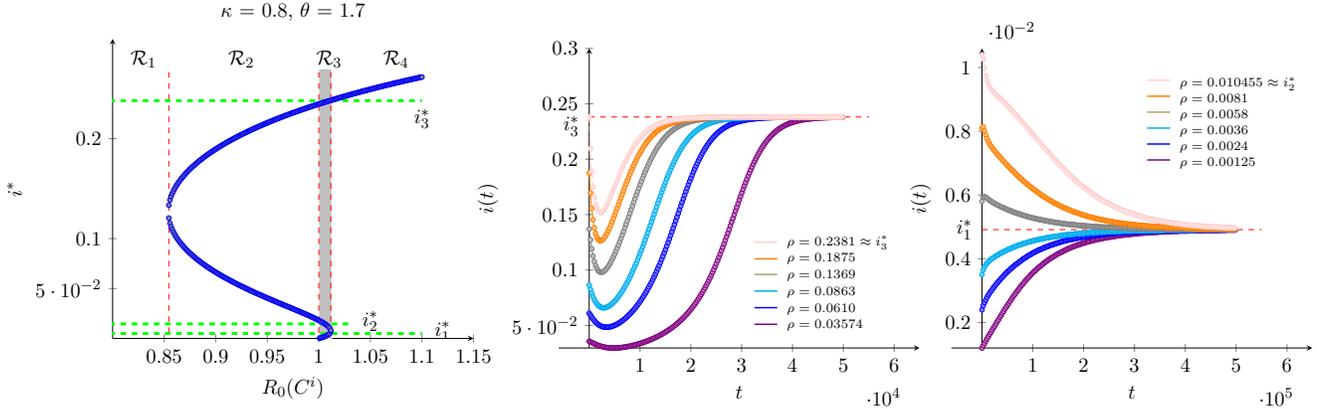

\subsection{Effect of \texorpdfstring{$(\kappa, \theta)$}{}}

Now we explore how the values of $\kappa$ and $\theta$ affect the size of the final equilibrium points \footnote{For each point on the grid, the convergence speed of the system varies. Each simulation was performed at a point in time when the difference between successive time states fell below a machine precision threshold.}. To compare within a given $R_0$, we fix $C^i=3$ (thus obtaining $R_0(C^i)$ as the examples above) and vary the values of $C^s$ and $C^{r}$ using $\kappa$ and $\theta$ respectively, we set $\kappa \in [0,1]$ and $\theta \in [0,2]$. Figure (\ref{kappa_theta_effect}) shows the effect of increasing both values on the stable steady states attained in the model for two different initial conditions. We observe that increasing $\kappa$ or $\theta$ yields a reduction in the final equilibrium of the system. However, we note that the value of this state is more sensible to $\theta$ than $\kappa$, indicating that, in the relapse case, contacts made by recovered individuals have a stronger impact on the disease outcome. Furthermore, we can see that high values of $(\kappa, \theta)$ may induce the system to attain a semi-disease-free steady state. This state, naturally, is more likely to be obtained when $i(0)$ is small, as seen by comparing both cases in Figure (\ref{kappa_theta_effect}). In other words, a considerable infected population makes this population more likely to become established.

\begin{figure}[H]
    \begin{subfigure}[b]{0.45\textwidth}
    \includegraphics[scale=0.4]{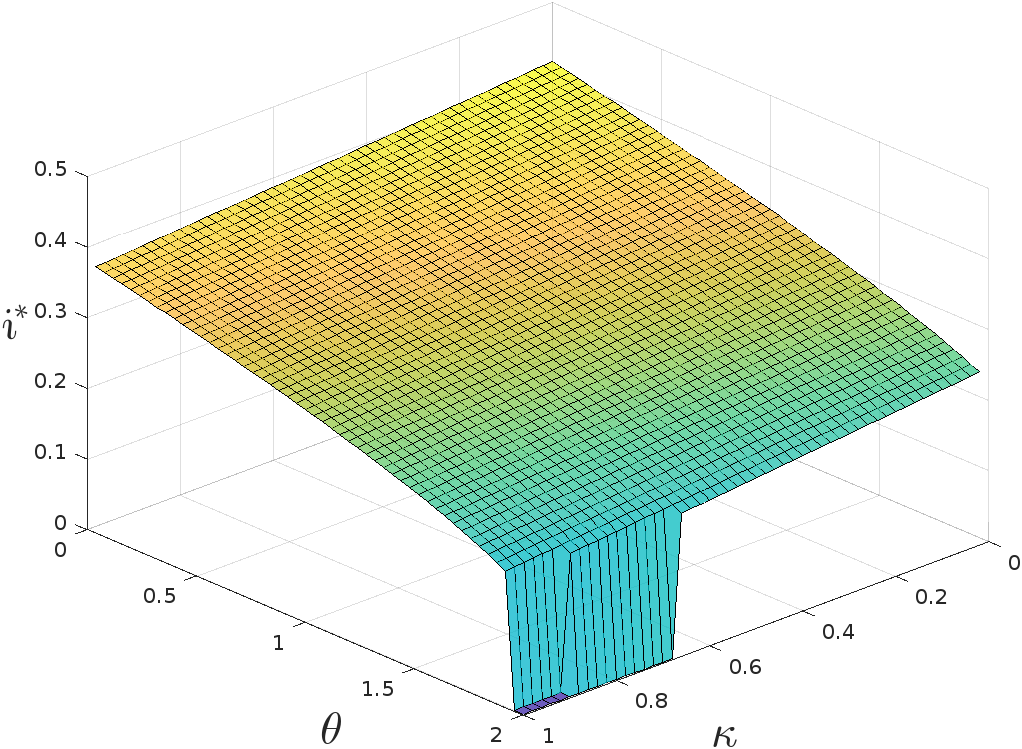}
    \end{subfigure}
    \begin{subfigure}[b]{0.45\textwidth}
    \includegraphics[scale=0.4]{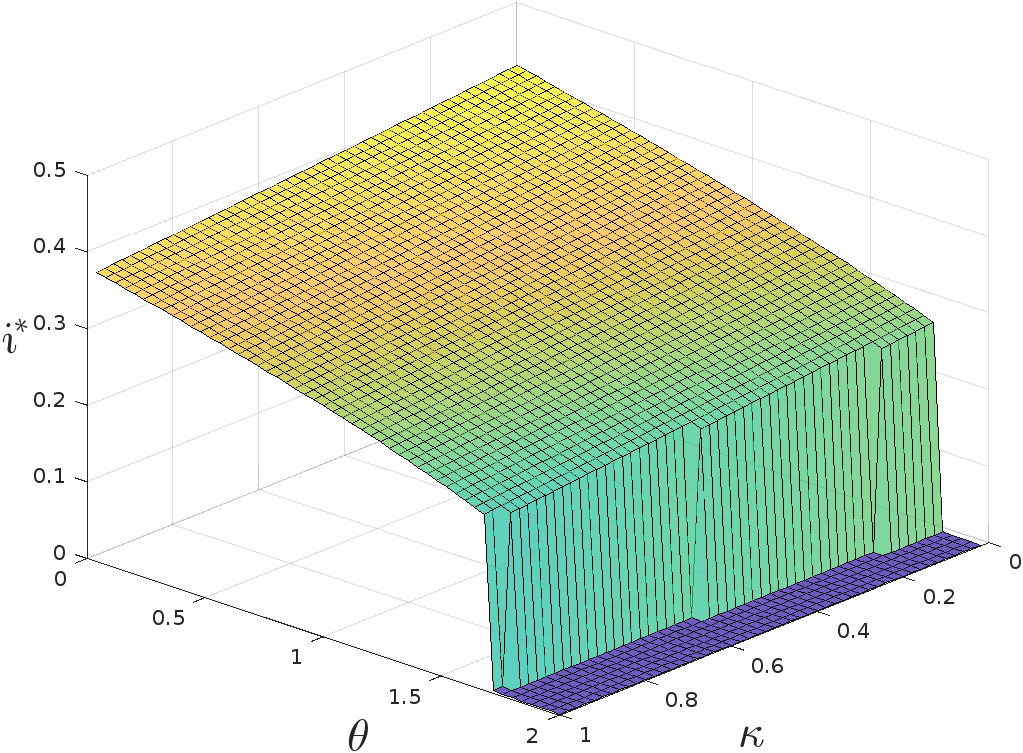}
    \end{subfigure}
    \caption{Effect of $(\kappa, \theta)$ for two different initial condition scenarios. On the left using $i(0)=0.1$, on the right $i(0)=0.02$. For low initial infected populations, a high value of $\theta$ yields disease eradication, independently of $\kappa$.}
    \label{kappa_theta_effect}
\end{figure}

Region $\mathcal{R}_3$ offers the most interesting behavior. In other regions, the disease is either maintained at a high steady prevalence or eradicated. In region $\mathcal{R}_3$, there is the possibility of a low equilibrium state, without achieving the disease disappearance from the population. However, the window for this behavior is small. For $\theta>\theta_1$ (as in Theorem \ref{main_theorem}), we find a window $[1, R_{0,\max}(\kappa, \theta)]$ which defines region $\mathcal{R}_3$, the next figure shows the upper limit of this interval, depending on $(\kappa, \theta)$. We can again infer a similar situation. This window becomes larger when $||(\theta, \kappa)||$ increases, however, the effect of $\theta$ is more prominent. In this case, $\theta_1 \approx 1.4$.

\begin{figure}[H]
    \centering
    \includegraphics[scale=0.75]{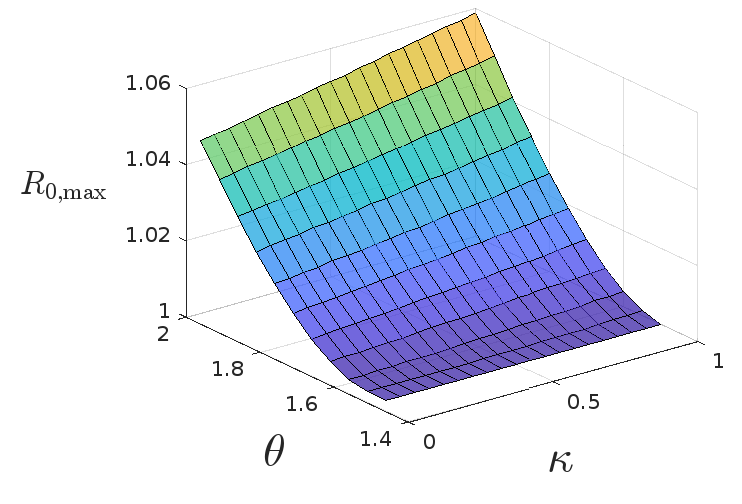}
    \caption{Effect of $(\kappa, \theta)$ on the $\mathcal{R}_3$ window length. When $\theta$ increases, there is more room to observe this region with more unstable behavior.}
    \label{R3_window_plot}
\end{figure}

\begin{remark}\label{size_of_beta}
Note that in these simulations, we decided to place less focus on the disease parameter $\beta$. The reason for this lies in the scale of the incidence rate function $g(\cdot)$. By using contact rates $C^i=3$ and $C^s = C^i/\kappa, C^{r}=C^i/\theta$, the scale forces us to reduce the value of $\beta$ to work with incidence rates that produce valuable epidemic scenarios. For example, if we use $\beta=0.0096$, as in \mbox{\cite{Arr22, Snchz07}}, this scenario would give us a basic reproductive number of $R_0 \simeq 10.105$ using these contacts, an exceedingly high and unrealistic number in many applications, which brings the model to a biological scenario with a single stable equilibrium point, resulting of an overestimation of the incidence term $\beta g(\cdot) S I /N$ of the system. Therefore, we note the importance of keeping in mind the scales of incidence rate functions when using the contact information in incidence rate functions for these models.
\end{remark}

\subsection{Other Examples}
In this subsection, we consider some numerical results and discussions on possible extensions of the contact rate disaggregation approach taken in the present study.

\begin{example}[Disaggregated contacts for Influenza, a non-relapse case]\label{influenza-example} We study the effect of disaggregated contact rates in epidemics models for individual-based transmitted diseases such as influenza. In this example, we use parameters for influenza transmission based on estimations performed in \cite{Chow08}. We then consider model \ref{model1} with the following epidemic parameters.

\begin{align*}
    &\bullet \beta = 0.07943065, \\
    &\bullet \gamma = 0.243902 =1/4.1\text{ (equivalent to }{4.1} \text{ recovery period as mentioned in \cite{Chow08}}), \\
    &\bullet \phi=0, \quad  \text{and } \mu = 0.0005.
\end{align*}

The estimation of the $\beta$ infection parameter was performed as follows. For influenza, \cite{Chow08} obtains a natural reproduction number estimation, $R_p \simeq 1.3$ for influenza seasons in different countries from 1972-1997. The number $R_p$ is defined as $R_p:= R_0(1-p)$ where $p$ is assumed to be a proportion of susceptible individuals that have been successfully immunized before an epidemic. We perform simulations with $R_0$ obtained using $p=0.2$, and $C^i=5$, giving $R_0 \approx 1.625$.

Although this model presents a non-relapse scenario, we can still incorporate the contact information and obtain similar numerical results as before. For example, as observed in Figure \ref{influenza_peak_prevalence}, the effect of $\theta$ over the peak epidemic prevalence of the model seems to be stronger than the effect of $\kappa$, thus indicating a similar behavior in the non-relapse case in terms of the contact proportions $\kappa, \theta$. In this non-relapse scenario, our focus shifts towards the peak prevalence, as the final equilibrium will have a null infected population.

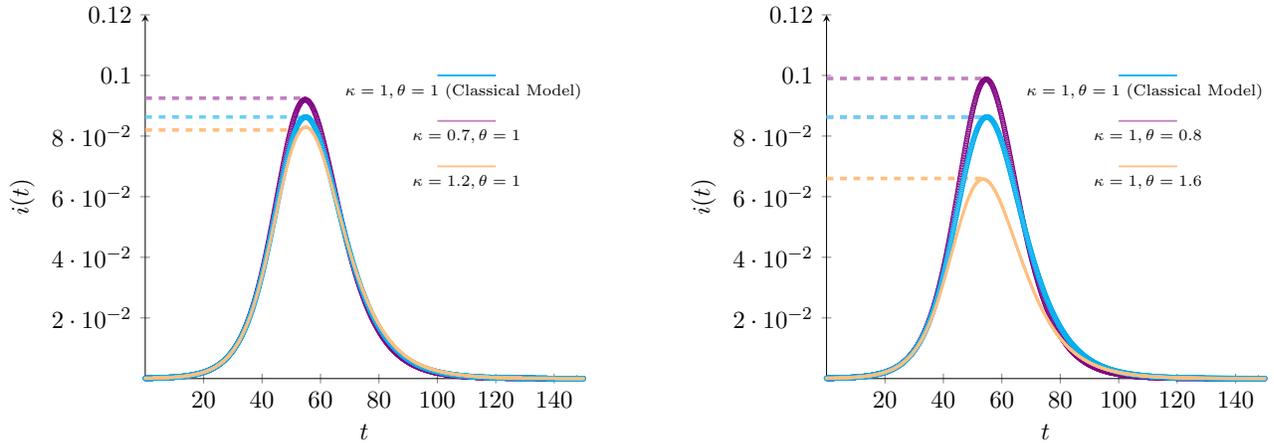
\begin{figure}[h]
    \centering
    \begin{subfigure}[b]{0.45\textwidth}
        \begin{tikzpicture}[scale=0.85]
            \begin{axis}[
                axis lines=middle,
                cycle list name=black white,
                ymax = 0.12,
                xlabel={$t$},
                ylabel={$i(t)$},
                ylabel near ticks,
                xlabel near ticks,
            ]
            \addplot+[
                color=violet,
                mark size=1pt,
                mark=*,
                mark options={fill=violet!50}
                ]
            table[x=t, y=y4]{Comparison_Data/dat_influenza_1.dat};
            \addplot +[violet!50, dashed, ultra thick, mark=none] coordinates {(0,0.0925) (55, 0.0925)};
            \addplot +[violet!50, solid, mark=none, thick] coordinates {(100,0.085) (120,0.085)};
            \node[below] at (110, 0.085){\tiny $\kappa = 0.7, \theta = 1$};
            \addplot+[
                color=cyan,
                mark size=1pt,
                mark=*,
                mark options={fill=cyan!50}]
            table[x=t, y=y7]
            {Comparison_Data/dat_influenza_1.dat};
            \addplot +[cyan!50, dashed, ultra thick, mark=none] coordinates {(0,0.086291) (55, 0.086291)};
            \addplot +[cyan, solid, mark=none, thick] coordinates {(100,0.1) (120,0.1)};
            \node[below] at (110, 0.1){\tiny $\kappa = 1, \theta = 1 \text{ (Classical Model) }$ };
            \addplot+[
                color=orange!50,
                mark size=0.5pt,
                mark=*,
                mark options={fill=orange!50}]
            table[x=t, y=y9]
            {Comparison_Data/dat_influenza_1.dat};
            \addplot +[orange!50, dashed, ultra thick, mark=none] coordinates {(0,0.082) (55, 0.082)};
            \addplot +[orange!50, solid, mark=none, thick] coordinates {(100,0.07) (120,0.07)};
            \node[below] at (110, 0.07){\tiny $\kappa = 1.2, \theta = 1$};
            \end{axis}
        \end{tikzpicture}
    \end{subfigure}\hspace{15mm}
    \begin{subfigure}[b]{0.45\textwidth}
        \begin{tikzpicture}[scale=0.85]
            \begin{axis}[
                axis lines=middle,
                cycle list name=black white,
                ymax = 0.12,
                xlabel={$t$},
                ylabel={$i(t)$},
                ylabel near ticks,
                xlabel near ticks,
            ]

            \addplot+[
                color=violet,
                mark size=1pt,
                mark=*,
                mark options={fill=violet!50}
                ]
            table[x=t, y=y1]{Comparison_Data/dat_influenza_1.dat};
            \addplot +[violet!50, dashed, ultra thick, mark=none] coordinates {(0,0.099) (55, 0.099)};
            \addplot +[violet!50, dashed, ultra thick, mark=none] coordinates {(0,0.086291) (55, 0.086291)};
            \addplot +[violet!50, solid, mark=none, thick] coordinates {(100,0.085) (120,0.085)};
            \node[below] at (110, 0.085){\tiny $\kappa = 1, \theta = 0.8$};
            \addplot+[
                color=cyan,
                mark size=1pt,
                mark=*,
                mark options={fill=cyan!50}]
            table[x=t, y=y3]
            {Comparison_Data/dat_influenza_2.dat};
            \addplot +[cyan!50, dashed, ultra thick, mark=none] coordinates {(0,0.086291) (55, 0.086291)};
            \addplot +[cyan, solid, mark=none, thick] coordinates {(100,0.1) (120,0.1)};
            \node[below] at (110, 0.1){\tiny $\kappa = 1, \theta = 1 \text{ (Classical Model) }$};

            \addplot+[
                color=orange!50,
                mark size=0.5pt,
                mark=*,
                mark options={fill=orange!50}]
            table[x=t, y=y9]
            {Comparison_Data/dat_influenza_2.dat};
            \addplot +[orange!50, dashed, ultra thick, mark=none] coordinates {(0,0.066) (55, 0.066)};
            \addplot +[orange!50, solid, mark=none, thick] coordinates {(100,0.07) (120,0.07)};
            \node[below] at (110, 0.07){\tiny $\kappa = 1, \theta = 1.6$};
            \end{axis}
        \end{tikzpicture}
    \end{subfigure}
    \caption{Different infected results varying both contact proportions $\kappa$ and $\theta$. We keep $\theta=1$ on the left and vary the proportion $\kappa$. Conversely, we keep $\kappa=1$ on the right and vary the value of $\theta$. We observe a bigger effect on the peak prevalence obtained in the figure on the right, that is, varying $\theta$.}
    \label{influenza_peak_prevalence}
\end{figure}

We expand these results for several combinations of values for $(\kappa, \theta)$ in the surface plot in Figure (\ref{influenza_peak_prevalence_heatmap}). We obtain a parallel situation as in the relapse simulations: we observe a bigger slope in the $\theta$ axis than in the $\kappa$ axis.

\begin{figure}[H]
    \centering
    \includegraphics[scale=0.8]{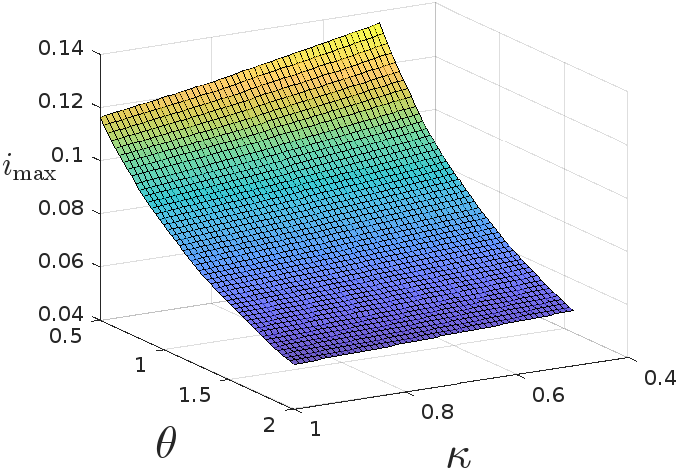}
    \caption{Disease peak infected prevalence varying both contact proportions $\kappa$ and $\theta$ for the influenza simulation example.}
    \label{influenza_peak_prevalence_heatmap}
\end{figure}

We note that, in general, a reduction in peak prevalence is expected using the disaggregated contact rates in comparison to the classical SIR model. For the non-relapse case, this was pointed out within simulations performed in \cite{Feni11} and also expanded by the results of this example; for the non-relapse case this observation has been supported by our simulations in the previous sections.
\end{example}

\begin{example}[Disaggregated contacts approach for more complex models: Discussion]\label{dengue-example} 

Incidence rate functions control the entrance of susceptible individuals into the infected population. In the classical SIR model, this occurs because of contact with other infected individuals. This is reflected in the numerator of the incidence rate function, being $C^sC^iN$, consisting of the susceptible contacts multiplied by the contribution of infected contacts\footnote{We could also argue that a better formulation for this denominator would be $C^sC^i I$, by reducing the effect only to consider the volume of the infected population. Looking at equations \ref{Balt1_incidence_rate_functions}, we see that \cite{Balt1} follows this idea. We decided instead to use the first formulation as it was the approach taken in the original adaptive setting reference: \cite{Feni11}.}. The denominator constitutes the total population activity, given by $\sum_h C^h h$, where the sum is over all possible health statuses, this yields the incidence rate as a proportion of the contact activity of the infected in terms of the total contact activity.

When adding new compartments or modifying the formulation of the model and trying to use the disaggregated contact approach discussed in the present study, this incidence rate formula should follow the same pattern: for the numerator to consider the contact information of compartments that might cause infection to susceptible individuals, and the denominator to reflect the total activity. We discuss some cases of how to apply this method to introduce such functions into more general epidemiological models.

First, consider the model proposed in \cite{Balt1}. Here, the authors provide a modified SIR model of risk-taker ($S_1$) and risk-evader ($S_2$) susceptible to study COVID-19 epidemic scenarios. This non-relapse epidemic model was fitted using the adaptive approach, which implies the use of contact-disaggregated incidence rate functions. For each group of susceptible, the infection might come as a result of contact either with infectious exposed -both risk-takers and risk-evaders ($E_1, E_2$)-, infectious asymptomatic -both risk-takers and risk-evaders ($A_1, A_2$)- and infected symptomatic ($I$). Therefore, the incidence rate for both susceptible compartments is given by 
\begin{align}\label{Balt1_incidence_rate_functions}
    & g_1(\cdot) := C^{S_1}\frac{\rho(C^{E_1}E_1 + C^{E_2}E_2) + \alpha(C^{A_1}A_1 + C^{A_2}A_2) + C^I I}{\sum_h C^h h} \nonumber \\
    & g_2(\cdot) := C^{S_2}\frac{\rho(C^{E_1}E_1 + C^{E_2}E_2) + \alpha(C^{A_1}A_1 + C^{A_2}A_2) + C^I I}{\sum_h C^h h},
\end{align}
where $h \in \{S_1,S_2,E_1,E_2,A_1,A_2,I,R\}$ (the model also has a recovered compartment, naturally), $\epsilon \in (0,1)$ is a reduction in infectious chances by taking a risk-evader approach, and $\rho, \alpha$ are reduction constants for non-symptomatic infectious populations. This example gives us an application of the abovementioned principle in constructing contact-based incidence rate functions.

Another interesting example of the application of this principle consists of vector-borne diseases. In these cases, susceptible humans become infected not by contact with other infected human individuals, but rather by contact with infected vectors. Furthermore, susceptible vectors become infected by contact with infected humans. This dynamic requests a change of form in the incidence rate functions. Let us take, for example, the dengue-chikungunya vector-borne epidemic model proposed in \cite{Sanchez18}. This model considers two populations: hosts ($h$) and vectors ($v$), and it is based on the following system of differential equations.

\begin{center}
\begin{tabular}{ p{7cm} | p{7cm}}
    Hosts & Vectors \\
    \hline
    $\frac{dS_h}{dt} = \mu_h N_h - \beta g_h(\cdot) S_h \frac{I_v}{N_v} - \mu_h S_h$ & $\frac{dS_v}{dt} = \mu_v N_v - \beta_v g_v(\cdot) S_v \frac{I_h}{N_h} - \mu_v S_v$ \\
    $\frac{dE_h}{dt} = \beta g_h(\cdot)S_h \frac{I_v}{N_v} - (\mu_h+\alpha_h) E_h$ & $ \frac{dE_v}{dt} = \beta g_v(\cdot) S_v \frac{I_h}{N_h} - (\mu_v+ \alpha_v) E_v$ \\
    $\frac{dI_h}{dt} = \alpha_h E_h - (\mu_h+\gamma) I_h$ & $\frac{dI_v}{dt} = \alpha_v E_v - \mu_v I_v$ \\
    $\frac{dR_h}{dt} = \gamma I_h - \mu_h R_h$ & 
    \end{tabular}
\end{center}

Here, the host population has five health classes: $S_h$, susceptible hosts, $E_h$, exposed hosts, $I_h$, infected hosts, and $R_h$, recovered hosts, and the vector population has three: $S_v$, susceptible vectors, $E_v$, exposed/latent vectors, and $I_v$, infected vectors. Total populations are $N_h$ for hosts and $N_v$ for vectors. There are no recovered vectors, as they die with the disease. For each population, we inserted incidence rate functions $g_h(\cdot), g_v(\cdot)$, which are constant and equal to $1$ in \cite{Sanchez18}.

Following the abovementioned principle, we can propose the following formulas for both functions.

\begin{align}\label{incidence_for_vector_borne}
    & g_h(\cdot) := \frac{C^{S_h}C^{I_v}I_v}{\sum_j C^j j} \nonumber \\
    & g_v(\cdot) := \frac{C^{S_v}C^{I_h}I_h}{\sum_j C^j h},
\end{align}
where $j \in \{S_h, E_h, I_h, R_h, S_v, E_v, I_v\}$ goes through all possible health statuses. This formulation considers the infection dynamics of vector-borne diseases: hosts become infected after contact with infected vectors, and vectors become infected after contact with infected hosts. Equations \ref{incidence_for_vector_borne} offer an alternative for researching the possible impact of non-linear incidence rate functions in more complex scenarios, such as vector-borne diseases. Introducing contact rates between susceptible hosts and infected vectors could offer a mathematical approach to model the interactions between these two populations and understand further indirect contact-impacting measures, such as protection against vectors. We believe this offers an opportunity to further understand the dynamics of this biological scenario, especially in light of real data-based analysis, such as performed in \cite{Sanchez18}.
\end{example}

\section{Discussion}\label{section5}

Motivated by the recent advances in the adaptive setting framework, we proposed a model incorporating non-linear relapse and contact behavior among individuals of different health classes. Our study aimed to explore the analytical properties of this model and investigate the effects of disaggregating contact rates on disease dynamics. We found that the model exhibits a high sensitivity to initial conditions and the relationships between contact rates, with significant implications for disease control strategies.

To gain insights into the behavior of our model, we performed numerical simulations that revealed several important features. First, we observed that the model's dynamics are highly dependent on the values of the basic reproductive number ($R_0$), which reflects the behavior of infected individuals. We established explicit conditions for multiple stable infected populations, which are highly sensitive to the model's initial conditions. Furthermore, we found that the impact of the contact rates for recovered individuals with relapse ($\theta$) is more substantial than that of infected individuals ($\kappa$), with larger differences required to achieve complete disease control for higher initial epidemic volumes.


Models incorporating relapse phenomena highlight the significant impact of recovered individuals on the progress of diseases. Such models exhibit different dynamics compared to those without relapse, with distinct results regarding recovery and relapse. Our study supports this view, with our conclusions showing that changes in the contact behavior of recovered individuals ($\theta$) have a more substantial effect on epidemic equilibria than corresponding changes in contact rates for infected individuals ($\kappa$), after normalizing with respect to the susceptible contact rate. The differences in behavior when recovering from the disease (or addiction) play a crucial role in determining the prevalence of the disease. Our findings suggest that a low $\theta$ value, indicating a lack of meaningful contact engagement by recovered individuals after infection, can establish a considerable epidemic burden. This highlights the importance of successfully reintegrating recovered individuals into society, which can reduce the likelihood of significant epidemics. Similar conclusions regarding the impact of recovered individuals on bifurcation plots have been observed in other relapse models \cite{Tasman22}.


Our results are closely tied to the behavior of infected individuals, as captured by the basic reproductive number $R_0$. We established explicit conditions for the existence of a region $1<R_0<R_{0,\max}$, characterized by multiple stable infected populations, which are highly sensitive to the model's initial conditions. We also found that this region becomes wider as the contacts of recovered individuals with relapse increase. In our simulations, we observed that the impact of $(\kappa,\theta)$ is intertwined with the initial infected population size, with larger initial epidemic volumes requiring more significant differences in contact rates to achieve disease control.



Building on the mathematical analysis presented in \cite{Arr22}, we confirmed the conclusions and discussions regarding the influence of recovered individuals on the prevalence of diseases with relapse.


Incorporating non-linear relapse significantly alters the dynamics of the SIR model, leading to more complex equilibria and bifurcation considerations. Our analysis adopted a non-linear relapse formulation (\ref{C_orig}) that assumes fixed contact rates among health compartments. We aimed to obtain analytical results that can be compared to future studies using a complete adaptive formulation. Such comparisons will be made against the non-linear non-adaptive model proposed in this article.

Our study underscores the importance of incorporating adaptive behavior and contact heterogeneity into epidemiological models, particularly in the presence of relapse phenomena. The results can inform public health policy decisions and provide a foundation for future research into the behavior of complex disease systems.

\section*{Acknowledgments}
The authors would like to thank the support from the Research Center in Pure and Applied Mathematics
and the Department of Mathematics at Universidad de Costa Rica.
\section*{Conflict of interest}
All authors declare no conflicts of interest in this paper.

\section{Appendix}\label{section6}

We use the general theory of Sturm chains to prove the theorem (\ref{main_theorem}). Here we set some basic definitions and properties. We base this treatment on the theory detailed in \cite{Eiser08}.

\begin{definition}
A sequence $ S = \{ p_0(x), p_1(x), p_2(x), \cdots, p_n(x) \}$ of polynomials in $\bR[x]$ is called a \textbf{Sturm chain} with respect to an interval $I$ if it satisfies the Sturm property:

\begin{center}
    If $\alpha \in I$ is a real root of $p_i(x)$, for some $i$ with $0 < i < n$. Then $p_{i-1}(\alpha)p_{i+1}(\alpha) < 0$. 
\end{center}

\end{definition}


\begin{definition}
    Let $f$ be a real rational function. The Cauchy index of $f$ at $x$ is defined by 
$$
        \Ind_x(f) := \Ind_x^+(f) - \Ind_x^-(f), \quad \text{where for} \sigma \in \{+,-\} \text{we have}
 $$
 \begin{align*}
        \Ind_x^{\sigma}(f): = \begin{cases}
            +\frac{1}{2}, & \text{ if } \displaystyle \lim_{y \to x^{\sigma}}f(y) = +\infty, \\
            -\frac{1}{2}, & \text{ if } \displaystyle  \lim_{y \to x^{\sigma}}f(y) = -\infty, \\
            0, & \text{ otherwise.} 
        \end{cases}
    \end{align*}
 
    The Cauchy index of $f$ at $[a,b]$ is then given by   
    \begin{equation}\label{Cauchyindex}
        \Ind_a^b(f) : = \Ind_a^+(f) - \Ind_b^-(f) + \sum_{x \in ]a,b[ } \Ind_x(f).
    \end{equation}

\end{definition}

\begin{remark}
    We can assume that $f$ is in its reduced form. That is, the numerator and denominator have no common factors. In this case, $\Ind_x(f)$ is non-zero (with values $1$ or $-1$) only for odd-multiplicity roots of the denominator, and since there are only finitely many such points, the sum in \ref{Cauchyindex} is well defined.
\end{remark}

We state the following generalization of the classical \textit{Sturm Theorem}.

\begin{theorem}[\cite{Eiser08}, Theorem 3.11]\label{Sturm}
    If $S = \{p_0(x),p_1(x),...,p_{n-1}(x),p_n(x)\}$ is a Sturm chain in $\mathbb{R}[x]$ with respect to $[a,b]$, then
    \begin{equation}\label{Sturmequality}
    \Ind_a^b\left(\frac{p_1}{p_0}\right) +  \Ind_a^b\left(\frac{p_{n-1}}{p_n}\right) = V_S(a) - V_S(b) .
    \end{equation}
    Where $V_S(c)$ is the number of sign changes in the values of consecutive polynomials of the chain $S$ at a point $x=c$; that is, the number of those $j \in \{1, \ldots, n \}$ for which $p_{j-1}(c) p_j(c) < 0$.
\end{theorem}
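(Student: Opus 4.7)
The strategy is to track the integer-valued step function $x \mapsto V_S(x)$ as $x$ ranges over $[a,b]$ and identify each of its jumps with a Cauchy contribution. Since each $p_i$ is polynomial, the signs of the $p_i$'s and hence $V_S(x)$ are locally constant except possibly at the finitely many real zeros of the product $p_0 p_1 \cdots p_n$. The proof therefore reduces to: (i) showing that the jump of $V_S$ across a zero of $p_i$ vanishes for $0<i<n$; (ii) computing the jump across a zero of $p_0$ (resp.\ $p_n$) and identifying it with $\Ind_\alpha(p_1/p_0)$ (resp.\ $\Ind_\alpha(p_{n-1}/p_n)$); and (iii) accounting for the boundary contributions at $a$ and $b$, which supply the half-indices $\Ind_a^+$ and $-\Ind_b^-$ appearing in the definition of $\Ind_a^b$.

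For step (i), suppose $p_i(\alpha)=0$ with $0<i<n$. The Sturm property gives $p_{i-1}(\alpha)p_{i+1}(\alpha)<0$, so $p_{i-1}$ and $p_{i+1}$ are nonzero on a neighborhood of $\alpha$ and retain their (opposite) signs there. For any $x$ in that neighborhood with $p_i(x)\ne 0$, the three-term sign block $(\operatorname{sgn} p_{i-1}(x),\operatorname{sgn} p_i(x),\operatorname{sgn} p_{i+1}(x))$ contains exactly one adjacent sign change: since the outer entries already differ, whichever sign $p_i(x)$ takes, the flip occurs either at the left pair or at the right pair. Hence $V_S(\alpha^-)=V_S(\alpha^+)$. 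As a byproduct, applying the Sturm property at $p_{j+1}$ for $0\le j<n-1$ shows that consecutive polynomials $p_j,p_{j+1}$ cannot share a root in the interior, so the only remaining jumps come from isolated zeros of $p_0$ or of $p_n$.

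For step (ii), fix $\alpha\in(a,b)$ with $p_0(\alpha)=0$; by the preceding observation $p_1(\alpha)\ne 0$. A brief case analysis on the multiplicity parity of $\alpha$ as a root of $p_0$ and on the sign of $p_1(\alpha)$ gives four scenarios. In each sign-changing (odd-multiplicity) scenario, the pair $(p_0,p_1)$ gains or loses exactly one sign change across $\alpha$, while $p_1/p_0$ jumps from $\pm\infty$ to $\mp\infty$, so $\Ind_\alpha(p_1/p_0)=\pm 1$; a direct sign check yields $V_S(\alpha^-)-V_S(\alpha^+)=\Ind_\alpha(p_1/p_0)$. In the even-multiplicity scenarios neither quantity changes. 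The identical argument applied at a zero of $p_n$ (using $p_{n-1}(\alpha)\ne 0$) gives $V_S(\alpha^-)-V_S(\alpha^+)=\Ind_\alpha(p_{n-1}/p_n)$.

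Summing these local contributions over the finitely many roots in $(a,b)$ telescopes to
\begin{equation*}
V_S(a^+)-V_S(b^-)=\sum_{\alpha\in(a,b)}\bigl[\Ind_\alpha(p_1/p_0)+\Ind_\alpha(p_{n-1}/p_n)\bigr].
\end{equation*}
For step (iii), when $p_0$ and $p_n$ are both nonzero at $a$ and $b$, the left-hand side coincides with $V_S(a)-V_S(b)$ and the boundary half-indices vanish, yielding the claimed identity. When an end-polynomial has a root at an endpoint, the same one-sided computation from step (ii) gives $V_S(a)-V_S(a^+)=-\Ind_a^+(p_1/p_0)$ (plus the $p_n$ analogue) and $V_S(b^-)-V_S(b)=\Ind_b^-(p_1/p_0)$ (plus the $p_n$ analogue), which combine with the interior sum to reproduce $\Ind_a^b(p_1/p_0)+\Ind_a^b(p_{n-1}/p_n)$. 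The main obstacle is this endpoint bookkeeping: because a sign change at $c$ requires $p_{j-1}(c)p_j(c)<0$ strictly, the value $V_S$ at an endpoint where $p_0$ or $p_n$ vanishes can differ from its one-sided limit, and the resulting $\pm\tfrac12$ half-jumps must be reconciled with the one-sided Cauchy conventions so that the integer identity holds on the nose.
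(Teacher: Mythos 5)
The paper does not actually prove this theorem: it is imported from \cite{Eiser08} as Theorem 3.11, so there is no internal proof to compare against. Your argument is the standard local one --- track the step function $x \mapsto V_S(x)$ and match each jump with a Cauchy-index contribution --- and steps (i) and (ii) are correct: the Sturm property kills the jumps at interior zeros of $p_i$ for $0<i<n$, and your parity/sign case analysis at zeros of $p_0$ and $p_n$ correctly yields $V_S(\alpha^-)-V_S(\alpha^+)=\Ind_\alpha(p_1/p_0)+\Ind_\alpha(p_{n-1}/p_n)$. Telescoping gives $V_S(a^+)-V_S(b^-)=\sum_{x\in(a,b)}\bigl[\Ind_x(p_1/p_0)+\Ind_x(p_{n-1}/p_n)\bigr]$, which already proves the theorem whenever no $p_i$ vanishes at $a$ or $b$ --- and that generic case is the only one the paper uses, since in Proposition \ref{sturm_norepeated_roots} and in Theorem \ref{main_theorem} the chain is evaluated only at points where every entry is nonzero.

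The genuine gap is precisely the endpoint bookkeeping that you flag as ``the main obstacle'' and then leave unresolved. With the definition of $V_S(c)$ adopted here (an integer, counting pairs with $p_{j-1}(c)p_j(c)<0$ strictly), the identities you would need, namely $V_S(a)-V_S(a^+)=\Ind_a^+(p_1/p_0)+\Ind_a^+(p_{n-1}/p_n)$ and $V_S(b^-)-V_S(b)=-\Ind_b^-(p_1/p_0)-\Ind_b^-(p_{n-1}/p_n)$, are false (and the two endpoint identities you display also have their signs reversed relative to these). Concretely, take $n=2$ and the chain $\{x-a,\,1,\,1\}$ on $[a,b]$: the Sturm property is vacuous, $V_S(a)-V_S(b)=0$, yet $\Ind_a^b(p_1/p_0)+\Ind_a^b(p_1/p_2)=\Ind_a^+(p_1/p_0)=\tfrac{1}{2}$. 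This half-integer mismatch cannot be ``reconciled''; it reflects the fact that the statement as transcribed uses a different sign-change count than the source, where a consecutive pair contributes $\tfrac{1}{2}\,|\operatorname{sign}p_j(c)-\operatorname{sign}p_{j-1}(c)|$, so that a pair with exactly one vanishing entry counts one half (one must also allow for interior $p_i$ vanishing at an endpoint, which under the strict count likewise shifts $V_S(a)$ away from $V_S(a^+)$). So either adopt that half-integer convention, under which your endpoint computation closes, or add the hypothesis that no $p_i$ vanishes at $a$ or $b$, under which your proof is already complete and suffices for everything this paper needs.
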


This theorem can be applied to a special case of non-repeated roots, but first, we need the following auxiliary lemma, which standard calculus arguments can prove.

\begin{lemma}\label{rootsofderivatives}
Let $p_0(x)$ be a polynomial of degree $n$ with $n$ distinct real roots.  Suppose that $c \in \bR$ such that $p'(c) = 0$, then $p(c)p''(c) < 0$.
\end{lemma}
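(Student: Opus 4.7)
The plan is to combine Rolle's theorem with a degree count for $p'$. Labeling the $n$ distinct real roots of $p$ as $r_1 < r_2 < \cdots < r_n$, Rolle's theorem produces at least one critical point in each of the $n-1$ open intervals $(r_i, r_{i+1})$. Since $\deg p' = n-1$, this accounts for every root of $p'$. In particular, all critical points of $p$ are simple roots of $p'$, so $p''$ does not vanish at any of them, and every critical point lies strictly between two consecutive roots of $p$.

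Given $c$ with $p'(c)=0$, fix the index $i$ with $c \in (r_i, r_{i+1})$. Since $p$ has no zeros in this open interval, $p(c) \neq 0$ and $p$ has constant sign on $(r_i, r_{i+1})$. If $p > 0$ on the interval, then $p$ attains an interior maximum on $[r_i, r_{i+1}]$ (the endpoints are zero and the interior is positive), so any interior critical point $c$ is a local maximum, forcing $p''(c) \leq 0$; combined with $p''(c) \neq 0$ from the previous paragraph, one gets $p''(c) < 0$, hence $p(c)p''(c) < 0$. The case $p < 0$ on the interval is symmetric: $c$ is a local minimum, $p''(c) > 0$, and again $p(c)p''(c) < 0$.

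The only step that takes any thought is the strictness $p''(c) \neq 0$, and this is the main obstacle in the argument. It is handled by the degree-versus-Rolle count above: if $p''(c)$ vanished at some root $c$ of $p'$, then $p'$ would have a root of multiplicity at least two, contradicting that the $n-1$ Rolle points in the gaps are distinct and already exhaust $\deg p' = n-1$. Everything else is routine sign analysis.
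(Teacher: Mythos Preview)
Your argument is correct and is precisely the kind of ``standard calculus argument'' the paper alludes to without writing out: the paper states the lemma and immediately says it ``can be proved by standard calculus arguments,'' giving no further details. Your Rolle-plus-degree-count to force simplicity of the critical points, followed by the sign analysis on each gap $(r_i,r_{i+1})$, is exactly the expected route.
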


\begin{proposition}\label{sturm_norepeated_roots}
    Let $p_0(x)$ be a polynomial of degree $n$ that has $n$ distinct real roots, then the sequence $ S_0 = \{p_0(x), p_0'(x), p_0^{(2)}(x) , p_0^{(3)}(x), \cdots, p_0^{(n)}(x)\}$ of higher derivatives of $p_0(x)$ is a Sturm chain with respect to any interval $[a,b]$.  Moreover, if $p_0(a)p_0(b) \neq 0$, then the number of roots of $p_0(x)$ in $[a,b]$ equals $V_{S_0}(a)-V_{S_0}(b)$.
\end{proposition}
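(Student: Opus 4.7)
The plan is to verify the Sturm property of $S_0$ as a direct consequence of Lemma \ref{rootsofderivatives}, and then to invoke Theorem \ref{Sturm} together with a short Cauchy-index computation for the second assertion.

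To establish that $S_0$ is a Sturm chain, first I would observe that since $p_0(x)$ has $n$ distinct real roots, iterated applications of Rolle's theorem yield that $p_0^{(k)}(x)$ is a polynomial of degree $n-k$ with exactly $n-k$ distinct real roots, for every $k \in \{0,1,\ldots,n\}$. Now let $\alpha$ be a real root of $p_0^{(i)}(x)$ with $0 < i < n$. Setting $p(x) := p_0^{(i-1)}(x)$, which has only distinct real roots by the previous observation, and noting that $p'(\alpha) = p_0^{(i)}(\alpha) = 0$, Lemma \ref{rootsofderivatives} applied to $p$ at $c = \alpha$ gives $p(\alpha) p''(\alpha) < 0$, i.e., $p_0^{(i-1)}(\alpha) p_0^{(i+1)}(\alpha) < 0$. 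This is precisely the Sturm property, and since $\alpha$ was an arbitrary real root of $p_0^{(i)}$, the chain $S_0$ is a Sturm chain on any interval $[a,b]$.

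For the second part, I would apply Theorem \ref{Sturm} to the Sturm chain $S_0$ on $[a,b]$. Since $p_0^{(n)}$ is a non-zero constant (equal to $n!$ times the leading coefficient of $p_0$), the rational function $p_{n-1}/p_n = p_0^{(n-1)}/p_0^{(n)}$ is a polynomial, hence has no poles, so $\Ind_a^b(p_{n-1}/p_n) = 0$. Thus (\ref{Sturmequality}) reduces to $V_{S_0}(a) - V_{S_0}(b) = \Ind_a^b(p_0'/p_0)$. Because $p_0$ has only simple real roots, at every such root $r$ we can write $p_0(x) = (x-r) q(x)$ with $q(r) \neq 0$, so $p_0'(x)/p_0(x) = 1/(x-r) + q'(x)/q(x)$, which tends to $-\infty$ as $x \to r^-$ and to $+\infty$ as $x \to r^+$, yielding $\Ind_r(p_0'/p_0) = 1$. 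Since the hypothesis $p_0(a)p_0(b) \neq 0$ forces $\Ind_a^+(p_0'/p_0) = \Ind_b^-(p_0'/p_0) = 0$, formula (\ref{Cauchyindex}) gives that $\Ind_a^b(p_0'/p_0)$ equals the number of roots of $p_0$ in $(a,b)$, hence also the number of roots of $p_0$ in $[a,b]$, which is what we wanted.

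No serious obstacle is expected: the Sturm property is essentially tautological once the right polynomial is fed into Lemma \ref{rootsofderivatives}, and the main remaining (routine) step is the local analysis of $p_0'/p_0$ near each simple root to pin down the Cauchy index; the auxiliary fact about the degenerate endpoint indices follows immediately from continuity of $p_0'/p_0$ at points where $p_0$ does not vanish.
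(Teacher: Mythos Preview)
Your proposal is correct and follows essentially the same route as the paper: both verify the Sturm property by feeding $p_0^{(i-1)}$ into Lemma~\ref{rootsofderivatives}, kill the second Cauchy index because $p_0^{(n)}$ is a nonzero constant, and compute $\Ind_a^b(p_0'/p_0)$ via the partial-fraction/local-factorization behaviour at simple roots. Your version is slightly more explicit about the endpoint contributions under the hypothesis $p_0(a)p_0(b)\neq 0$, which the paper leaves implicit.
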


\begin{proof}
    By a repeated application of Lemma \ref{rootsofderivatives}, it is easy to check that the sequence $S_0$ of higher derivatives of $p_0(x)$ is a Sturm chain with respect to any interval $I$. Given that $p_0^{(n)}$ is a non-null constant, then $\frac{p_0^{(n-1)}}{p_0^{(n)}}$ is a polynomial which means that $\Ind_a^b\left(\frac{p_0^{(n-1)}}{p_0^{(n)}}\right)=0$. For the first term in \ref{Sturmequality}, if we write $p_0(x) = c (x-a_1)\cdots(x-a_n)$, where $a_1 , \ldots a_n$ are the roots of $p_0$, we have
    $$
    \frac{p_1(x)}{p_0(x)} = \frac{p_0'(x)}{p_0(x)} = \frac{ c \sum_{i=1}^n \prod_{j \neq i} (x-a_j)}{ c \prod_{j=1}^n (x-a_j) } = \frac{1}{x - a_1} + \frac{1}{x - a_2} + \ldots \frac{1}{x - a_n}. 
    $$
    So, for each $j = 1, \ldots, n$,  $\lim_{x \to a_j^{\pm}} = \pm \infty$, that is $\Ind_{a_j}\left(\frac{p_0'}{p_0}\right) = 1$. Therefore, the Cauchy index $\Ind_a^b\left(\frac{p_0'}{p_0}\right)$ is the number of roots of $p_0$ that are contained in $[a,b]$, and by Theorem \ref{Sturm}, this coincides with $V_{S_0}(a)-V_{S_0}(b)$.
\end{proof}


\begin{thebibliography}{10}

\bibitem{AndMay79}
R.~Anderson and R.~May.
\newblock Population biology of infectious diseases: {P}art {I}.
\newblock {\em Nature}, 280:361–367, 1979.

\bibitem{Blythe91}
S.~Blythe, K.~Cooke, and C.~Castillo-Chavez.
\newblock Autonomous risk-behavior change, and non-linear incidence rate, in
  models of sexually transmitted diseases.
\newblock {\em Biometrics Unit Technical Report}, B-1048-M, 1991.

\bibitem{Brur09}
F.~Brauer.
\newblock Mathematical epidemiology is not an oxymoron.
\newblock {\em BMC Public Health}, 9(Suppl 1):S2, 2009.

\bibitem{Eiser08}
M.~Eisermann.
\newblock {The Fundamental Theorem of Algebra Made Effective: An Elementary
  Real-Algebraic Proof via Sturm Chains}.
\newblock {\em The American Mathematical Monthly}, 119, 08 2008.

\bibitem{Balt2}
B.~Espinoza, M.~Marathe, S.~Swarup, and M.~Thakur.
\newblock Asymptomatic individuals can increase the final epidemic size under
  adaptive human behavior.
\newblock {\em Sci Rep}, 11(19744), 2021.

\bibitem{Balt1}
B.~Espinoza, S.~Swarup, C.~L. Barrett, and M.~Marathe.
\newblock Heterogeneous adaptive behavioral responses may increase epidemic
  burden.
\newblock {\em Sci Rep}, 12(11276), 2022.

\bibitem{Chow08}
G.~Chowell, M.~Miller, and C.~Viboud.
\newblock Seasonal influenza in the United States, France, and Australia: Transmission and prospects for control.
\newblock {\em Epidemiology \& Infection}, 136(6), 2008.

\bibitem{Feni11}
E.~P. Fenichel, C.~Castillo-Chavez, M.~G. Ceddia, G.~Chowell, P.~A.~G. Parra,
  G.~J. Hickling, G.~Holloway, R.~Horan, B.~Morin, C.~Perrings, M.~Springborn,
  L.~Velazquez, and C.~Villalobos.
\newblock Adaptive human behavior in epidemiological models.
\newblock {\em Proc. Natl. Acad. Sci. U.S.A.}, 108(15):6306--6311, 2011.

\bibitem{Funk10}
S.~Funk, M.~Salathé, and V.~Jansen.
\newblock Modelling the influence of human behaviour on the spread of
  infectious diseases: a review.
\newblock {\em J. R. Soc. Interfac}, 7:1247–1256, 2010.




\bibitem{Hethcote00}
H.~Hethcote.
\newblock {The mathematics of infectious diseases}.
\newblock {\em SIAM Review}, 42(2), 2000.

\bibitem{Hethcote91}
H.~Hethcote and P.~van~den Driessche.
\newblock {Some epidemiological models with nonlinear incidence}.
\newblock {\em J Math Biol.}, 29(3), 1991.

\bibitem{HU201212}
Z.~Hu, W.~Ma, and S.~Ruan.
\newblock Analysis of sir epidemic models with nonlinear incidence rate and
  treatment.
\newblock {\em Mathematical Biosciences}, 238(1):12--20, 2012.

\bibitem{Li17}
G.~Li and Y.~Zhang.
\newblock Dynamic behaviors of a modified {SIR} model in epidemic diseases
  using nonlinear incidence and recovery rates.
\newblock {\em PLoS ONE}, 12(4), 2017.

\bibitem{Liu87}
W.~Liu, H.~Hethcote, and S.~Levin.
\newblock Dynamical behavior of epidemiological models with nonlinear incidence
  rates.
\newblock {\em J. Math. Biol.}, 25(4):359--80, 1987.


\bibitem{Lauro21}
W.~Liu, H.~Hethcote, and S.~Levin.
F.~Di Lauro, L.~Berthouze, M.~Dorey, J.~Miller and I.~Kiss.
\newblock The Impact of Contact Structure and Mixing on Control Measures and Disease-Induced Herd Immunity in Epidemic Models: A Mean-Field Model Perspective. 
\newblock {\em J. Bull. Math. Biol.}, 83(117), 2021.


\bibitem{Alex06}
S.~Moghadas and M.~Alexander.
\newblock {Bifurcations of an epidemic model with non-linear incidence and
  infection-dependent removal rate}.
\newblock {\em Math Med Biol}, 23(3), 2006.

\bibitem{CCast13}
B.~Morin, E.~P. Fenichel, and C.~Castillo-Chavez.
\newblock {SIR} dynamics with economically driven contact rates.
\newblock {\em Nat Resour Model}, 26(4), 2013.

\bibitem{Mossong08}
J.~Mossong, N.~Hens, M.~Jit, P.~Beutels, K.~Auranen, R.~Mikolajczyk,
  M.~Massari, S.~Salmaso, G.~Tomba, J.~Wallinga, J.~Heijne, M.~Sadkowska-Todys,
  M.~Rosinska, and W.~Edmunds.
\newblock {Social contacts and mixing patterns relevant to the spread of
  infectious diseases}.
\newblock {\em PLoS Med}, 25(5), 2008.

\bibitem{Snchz07}
F.~Sanchez, X.~Wang, C.~Castillo-Chavez, D.~Gorman, and P.~Gruenewald.
\newblock Drinking as an epidemic: A simple mathematical model with recovery
  and relapse.
\newblock In {\em Therapist’s guide to evidence based relapse prevention}.
  Burlington: Academic Press, 2007.

\bibitem{Sanchez18}
F.~Sanchez, L.A.~Barboza, D.~Burton, A.~Cintrón-Arias.
\newblock Comparative analysis of dengue versus chikungunya outbreaks in Costa Rica
\newblock {\em Ricerche di Matematica}, 67:163-174,
  2018.

\bibitem{Schz19}
F.~Sanchez, J.~G. Calvo, E.~Segura, and Z.~Feng.
\newblock A partial differential equation model with age-structure and
  nonlinear recidivism: Conditions for a backward bifurcation and a general
  numerical implementation.
\newblock {\em Computers \& Mathematics with Applications}, 78(12):3916--3930,
  2019.

\bibitem{Arr22}
F.~Sanchez, J.~Arroyo-Esquivel, and J.~G. Calvo.
\newblock A mathematical model with nonlinear relapse: conditions for a
  forward-backward bifurcation.
\newblock {\em Journal of Biological Dynamics}, 17(1), 2023.

\bibitem{Feni11_2}
M.~Springborn, G.~Chowell, M.~MacLachlan, and E.~P. Fenichel.
\newblock Accounting for behavioral responses during a flu epidemic using home
  television viewing.
\newblock {\em BMC Infect Dis.}, 15(21), 2015.

\bibitem{Tasman22}
H.~Tasman, D.~Aldila, P.~Dumbela, M.~Ndii, F.~Fatmawati, H.~F.F., and
  C.~Chukwu.
\newblock {Assessing the Impact of Relapse, Reinfection and Recrudescence on
  Malaria Eradication Policy: A Bifurcation and Optimal Control Analysis}.
\newblock {\em Trop. Med. Infect. Dis.}, 263(7), 2022.

\bibitem{Xiao07}
D.~Xiao and S.~Ruan.
\newblock Global analysis of an epidemic model with non monotone incidence
  rate.
\newblock {\em Math Biosci.}, 208(2):419--429, 2007.

\bibitem{Xiao10}
Y.~Xiao and S.~Tang.
\newblock Dynamics of infection with nonlinear incidence in a simple
  vaccination model.
\newblock {\em Nonlinear Anal. Real World Appl.}, 11(5):4154–4163, 2010.

\bibitem{Zhang20}
J.~Zhang, M.~Litvinova, Y.~Liang, Y.~Wang, W.~Wang, S.~Zhao, Q.~Wu, S.~Merler,
  C.~Viboud, A.~Vespignani, M.~Ajelli, and H.~Yu.
\newblock {Changes in contact patterns shape the dynamics of the COVID-19
  outbreak in China}.
\newblock {\em Science}, 368(6498), 2020.




\end{thebibliography}

\end{document}